\newcommand\NN{\mathbb{N}}
\theoremstyle{plain}
\newtheorem{theorem}{Theorem}[section]
\newtheorem{proposition}{Proposition}[section]
\newtheorem{lemma}{Lemma}[section]
\newtheorem{corollary}{Corollary}[section]
\theoremstyle{definition}
\newtheorem{definition}{Definition}[section]
\newtheorem{remark}{Remark}[section]
\newtheorem{question}{Question}[section]
\newcommand{\N}{\mathbb{N}} %% Conjunto naturales:     \N
\newcommand{\e}{\varepsilon}
\def\e{\varepsilon}
\title{Growth orders and ergodicity for absolutely Ces\`{a}ro bounded operators}
\author[Luciano Abadias]{Luciano Abadias}
\address[L. Abadias]{Departamento de Matem\'aticas, Instituto Universitario de Matem\'aticas y Aplicaciones, Universidad de Zaragoza, 50009 Zaragoza, Spain.}
\email{labadias@unizar.es}
\author[Antonio Bonilla]{Antonio Bonilla}
\address[A. Bonilla]{Departamento de An\'alisis Matem\'atico, Universidad de la Laguna, 38271, La Laguna (Tenerife), Spain.}
\email{abonilla@ull.es}
\thanks{The first author has been partially supported by Project MTM2016-77710-P, DGI-FEDER, of the MCYTS and Project E-64, D.G. Arag\'on, Universidad de Zaragoza, Spain. The second author is partially supported by Project  MTM2016-75963-P, DGI-FEDER, of the MCYTS}
\subjclass[2010]{47A35, 47A10, 47B99.}
\keywords{Absolutely Ces\`aro bounded, Kreiss bounded, backward shift operator, ergodicity.}
\begin{document}

\begin{abstract}

In this paper, we extend the concept of absolutely Ces\`aro boundedness to the fractional case. We construct a weighted shift operator belonging to this class of operators, and we prove that if $T$ is an absolutely Ces\`{a}ro bounded operator of order $\alpha$ with $0<\alpha\le 1,$  then $\| T^n\|=o(n^{\alpha})$, generalizing the result obtained for $\alpha =1$. Moreover, if $\alpha > 1$, then $\|T^{n}\|= O(n)$. We apply such results to get stability properties for the Ces\`aro means of bounded operators.

\end{abstract}

\maketitle

\section{Introduction}

Let $X$ be a complex Banach space and $T\in\mathcal{B}(X),$ the Banach algebra of all bounded linear operators defined on $X.$ We denote by $\mathcal{T}$ the discrete semigroup given by the natural powers of the operator $T,$ that is, ${\mathcal T}(n):= T^n$ for $n\in \Bbb N_0$. Recall (\cite{Abadias, ALMV, Ed-dari 1, Ed-dari 2, LSAS,SZ}) that the \emph{Ces\`aro sum} of order $\alpha \geq0$
of $T$ is the family of operators $(\Delta ^{-\alpha}{\mathcal T}(n))_{n\in \Bbb N_0}\subset B(X)$ given by
$$
\Delta ^{-\alpha}{\mathcal T}(n) x:=(k^{\alpha}*\mathcal T)(n)x=\sum _{j=0}^n  k^{\alpha}(n-j) T^jx, \quad  x\in X,\, n\in \Bbb N_0,
$$
and the \emph{Ces\`aro mean} of order $\alpha\geq 0$ of $T$ is the family of operators $(M_T^{\alpha}(n))_{n\in \Bbb N_0}$ given by
$$
M_T^{\alpha}(n)x:= \frac{1}{k^{\alpha+1}(n)}\Delta ^{-\alpha}{\mathcal T}(n)x,\quad  x\in X,\, n\in \Bbb N_0,
$$
where $(k^{\alpha}(n))_{n\in\N_0}$ is the \emph{Ces\`aro kernel} of order $\alpha.$ It is given by $$k^{\alpha}(n):=\frac{\Gamma(\alpha+n)}{\Gamma(\alpha)\Gamma(n+1)},\quad n\in \Bbb N_0,\, \alpha>0,$$ where $\Gamma$ is the Gamma  function, and $k^0(n)=\delta_{n,0}$ for $n\in\N_0,$ where $\delta_{n,j}$ is the Kronecker delta, i.e., $\delta_{n,j}=1$ if $j=n$ and $0$ otherwise. We attach some of its properties that we will use along the paper. It satisfies that $k^{\alpha}*k^{\beta}=k^{\alpha+\beta},$ for $\alpha,\beta>0.$ Asymptotically, the behaviour of $k^{\alpha}$ is
 \begin{equation}\label{asymp}
   k^{\alpha}(n):=\frac{n^{\alpha-1}}{\Gamma(\alpha)}(1+O(1/n)),
 \end{equation}
and $k^{\alpha}$ is increasing (as a function of n) for $\alpha >1,$ decreasing  for $0<\alpha <1,$ and $k^{1}(n)=1$ for $n\in \Bbb N_0$. Moreover, the Gautschi inequality implies
\begin{equation}\label{gamma}
\frac{(n+1)^{\alpha -1}}{\Gamma(\alpha)}\leq k^{\alpha}(n)\leq \frac{n^{\alpha -1}}{\Gamma(\alpha)}, \;\; n\in \Bbb N, \;\; 0<\alpha \leq 1.
\end{equation}
For more details see \cite{Lizama1, Lizama, Zygmund}.

\bigskip

Recall that a bounded linear operator $T$ on $X$ is called \emph{$(C, \alpha)$-Ces\`{a}ro bounded} if the Ces\`{a}ro mean of order $\alpha$ of $T,$ $(M_T^{\alpha}(n))_{n\in \N_0},$ is uniformly bounded. The particular cases $\alpha=0$ and $\alpha=1$ are well-known, the power boundedness and Ces\`{a}ro boundedness respectively. It is known that $(C,\alpha)$-Ces\`{a}ro boundedness implies $(C,\beta)$-Ces\`{a}ro boundedness for $\beta>\alpha\geq 0,$ however the converse is not true in general: the Assani matrix is Ces\`{a}ro bounded but it is not power bounded. In the case that $(M_T^{\alpha}(n))_{n\in\N_0}$ converges in the strong topology of $X,$ we say that the operator $T$ is \emph{$(C, \alpha)$-mean ergodic} (for $\alpha =1$ it said to be mean ergodic). Properties, characterization thorough functional calculus and ergodic results for $(C,\alpha)$-Ces\`{a}ro bounded operators can be found in \cite{ALMV, AS, De00, E, Ed-dari 1, Ed-dari 2, LSAS} and references therein.

\bigskip

Also, there are connections between some boundedness of the Ces\`aro means and the resolvent of $T.$ We recall that the condition
$$
\|M^2_{\lambda T}(n)\| \le C,\;\; \mbox{ for } |\lambda |=1  \mbox{ and } n\in\N_0,
$$ is equivalent to the \emph{Kreiss bounded} condition
$$
\|(\lambda-T)^{-1}\| \le \frac{ C}{(|\lambda|-1)}, \;\; \mbox { for all } |\lambda |>1,
$$
and $$
\|M_{\lambda T}(n)\| \le C, \;\; \mbox{ for } |\lambda |=1  \mbox{ and } n\in\N_0,
$$ is equivalent to  the \emph{Uniformly  Kreiss bounded} condition
$$
\left\|\sum_{k=0} ^{n} \lambda^{-k-1} T^k\right\| \le \frac{ C}{(|\lambda|-1)}, \;\; \mbox { for all } |\lambda |>1,\, n\in\N_0,
$$
where $C$ is a positive constant. There exist Kreiss bounded operators which are not Ces\`{a}ro bounded, and reciprocally, Ces\`{a}ro boundedness does not imply Kreiss boundedness, see \cite{SZ}. Also, there are Kreiss bounded operators that they are not uniformly Kreiss bounded, see \cite{MSZ}. Moreover, in finite dimensional Hilbert spaces, Kreiss bounded and power bounded are equivalent classes of operators, see \cite{MSZ}.

\bigskip

Growth orders for the Ces\`aro means of any order (especially for the natural powers) of $(C,\alpha)$-Ces\`aro bounded or Kreiss bounded operators have been studied in a large amount of papers (\cite{Abadias, Ed-dari 1, LSAS, N, SZ} and references therein). In the following remark, we include some interesting particular cases.

\begin{remark}\label{kreiss2} Given $T\in X,$ $\sigma(T)\cap \partial \mathbb{D}$ denotes the spectrum of $T$ on the unit circle and $m(\sigma(T)\cap \partial \mathbb{D})$ its Lebesgue measure. From \cite{SZ}, \cite{N} and \cite{BermBMP}, it follows that if one of next conditions is satisfied:
\begin{enumerate}
\item [i)] $T$ is Ces\`aro bounded and $\sigma(T)\cap \partial \Bbb D \subset \{1\}.$

\item [ii)] $T$ is Kreiss  bounded and $m(\sigma(T)\cap \partial \Bbb D)=0$.

\item [iii)] $T$ is uniformly Kreiss  bounded and $X$ is a Hilbert space.

\end{enumerate}
Then  $\|T^n\|=o(n)$.

Furthermore, a similar result to the part i) is true for $\alpha\geq 1,$ that is, if $T$ is $(C,\alpha)$-Ces\`aro bounded with $\alpha\geq1$ and $\sigma(T)\cap \partial \Bbb D \subset \{1\},$ then $\|T^n\|=o(n^{\alpha})$ (\cite[Theorem 4.3]{Abadias}). Also, if we only consider that $T$ is Ces\`aro bounded or Kreiss  bounded, then $\|T^n\|=O(n)$ (\cite{Shi, SW}).
\end{remark}

\bigskip

Recently in \cite{HL}, the authors introduced a new concept about the growth of the Ces\`aro means. An operator $T\in\mathcal{B}(X)$ is called \emph{absolutely Ces\`aro bounded}, if there exists $C>0$ such that $$\sup_{n\in \N_0}\frac{1}{n+1}\sum_{j=0}^n \lVert T^jx\rVert\leq C\lVert x\rVert,\quad x\in X.$$ The starting point of the above concept is related to the distributional chaos theory. In \cite{BermBMP}, some examples of absolutely Ces\`aro bounded are given, as well as its connection with power bounded and Kreiss bounded properties. Also they prove that if $T$ is an absolutely Ces\`aro bounded operator, then $\|T^n\|=o(n).$ In this paper we introduce an extension of the absolutely Ces\`aro boundedness, and our main aim is to construct simple examples, to study the asymptotic behaviour of the orbits, to compare with other mentioned concepts, and to show ergodic results.

\begin{definition}
Let $\alpha>0.$ We say that a bounded linear operator $T$ on $X$ is \emph{absolutely $(C, \alpha)$-Ces\`aro bounded} if there exists a constant $C > 0$ such that
$$
\sup_{n \in \N_0} \frac{1}{k^{\alpha+1}(n)}\sum _{j=0}^n  k^{\alpha}(n-j) \|T^jx\|\le C\|x\|\;,
$$
for all $x\in X$.
\end{definition}
For $\alpha =1$, the above definition is the absolutely Ces\`aro boundedness. In order to clarify to the reader, we show the following sketch:
$$\text{Power bounded }\Rightarrow\text{ Absolutely }(C, \alpha)\text{-Ces\`aro bounded }$$
$$\Rightarrow\ (C, \alpha)\text{-Ces\`aro bounded  }\Rightarrow\ \|T^n\|=O(n^{\alpha})$$
Particularly, it follows that all absolutely Ces\`{a}ro bounded operators are uniformly Kreiss bounded, and that every absolutely $(C,2)$-Ces\`{a}ro bounded operator is Kreiss bounded.

\bigskip

The outline of this paper is as follows: In Section 2, for $0<\alpha \leq 1,$ we construct a class of weighted backward shift operators $(T_{\beta})_{\beta >0}$ on $\ell^p(\Bbb N),$ for  $1\le p < \infty,$ which are mixing absolutely $(C,\alpha)$-Ces\`{a}ro bounded with $\|T^n\|= (n+1)^{\beta}$ for $0<\beta<\alpha/p$ (Theorem \ref{ejemplos} and Corollary \ref{mixing}), and no $(C,\alpha)$-Ces\`aro bounded if $\beta\geq 1/p$ (Remark \ref{remark}). Moreover, we prove that all absolutely $(C,\alpha)$-Ces\`{a}ro bounded operators in a Banach space satisfy that $\|T^n\|=o(n^{\alpha})$ with $0<\alpha\le 1$ (Corollary \ref{ACB}), generalizing the result obtained in \cite{BermBMP} for $\alpha =1,$ and $\|T^n\|=O(n)$ for $\alpha>1$ (Corollary \ref{On}). In Hilbert spaces, for $0<\alpha\leq 1,$ if $T$ is an absolutely $(C,\alpha)$-Ces\`aro bounded operator then $\|T^n\|=o(n^{min\{\alpha,1/2\}})$ (Corollary \ref{acbhilbert}). In Section 3, we study some ergodic applications. We prove that the absolutely $(C,\alpha)$-Ces\`aro boundedness in a reflexive Banach space implies $(C,\alpha)$-mean ergodicity (Corollary \ref{ErgRef}). Also, we study stability results for the Cesàro means of fractional order of a bounded linear operator $T,$ assuming spectral and growth conditions on $T$ (Theorem \ref{ergodic} and Corollary \ref{ergodic2}).

%\noindent{\bf Notation.} We write as $\N$ the set of natural numbers and $\N_0:=\N \cup\{0\}$; we denote by $\partial \D$ the unit circle, that is, $\partial\D:=\{z\in \C\,\,:\,\,\vert z\vert =1\}.$

%We denote by $\{e_n\}_{n\in \NN}$ the standard canonical basis on $\ell^p(\NN)$ for $1\leq p<\infty ,$ that is, $e_n=(\delta_{n, j})_{j\in \NN}:=(0, \ldots, 0,\underbrace{1}_{\substack{n}},0,\ldots),$ where $\ell^p(\N)$ is the %Lebesgue space of complex sequences $x=\displaystyle \sum_{j=1}^\infty \alpha_j e_j ,$ with
%$\displaystyle\Vert x\Vert_p:=\left(\sum_{j=1}^{\infty}\lvert \alpha_j\rvert^p\right)^{1\over p}<\infty.$

%Given $\{\alpha_{n}\}$ and $\{\beta_{n}\}$ two sequences of positive numbers, we say that $\alpha_n=O(\beta_n)$ as $n\to\infty,$ or $\alpha_n=o(\beta_n)$ as $n\to\infty,$ if the sequence $\{\alpha_n/\beta_n\}$ is bounded, or it %converges to zero, respectively.

%We denote by $\Gamma$  the usual  Gamma  function given by $\Gamma (z)=\int_0^\infty e^{-t}t^{z-1}dt,$ for $\Real z>0.$

%Given $T\in\mathcal{B}(X),$ $\sigma(T)\cap \partial \mathbb{D}$ denotes the spectrum of $T$ on the unit circle, and $m(\sigma(T)\cap \partial \mathbb{D})$ its Lebesgue measure.

\section{Examples and growth orders of absolutely Ces\`{a}ro bounded operators}

We denote by $\{e_n\}_{n\in \NN}$ the standard canonical basis on $\ell^p(\NN)$ for $1\leq p<\infty ,$ that is, $e_n=(\delta_{n, j})_{j\in \NN}:=(0, \ldots, 0,\underbrace{1}_{\substack{n}},0,\ldots),$ where $\ell^p(\N)$ is the Lebesgue space of complex sequences $x=\displaystyle \sum_{j=1}^\infty \alpha_j e_j ,$ with
$\displaystyle\Vert x\Vert_p:=\left(\sum_{j=1}^{\infty}\lvert \alpha_j\rvert^p\right)^{1\over p}<\infty.$

In the literature, there are only simple examples of $(C,\alpha)$-Ces\`{a}ro bounded operators (no power bounded) for $\alpha\in\N.$ In the following, we construct a class of $(C,\alpha)$-Ces\`{a}ro bounded backward shift operators on $\ell^p(\N),$ with $0<\alpha<1,$ which are not power bounded.

\begin{theorem}\label{ejemplos} Let $0<\alpha \le 1$, $1\leq p<\infty$ and $0<\beta <\frac{\alpha}{p}.$ The unilateral weighted backward shift operator $T,$ defined by $Te_1:=0$ and $Te_j:=w_je_{j-1}$ for $j>1,$ with $w_j:=\displaystyle \left( \frac{j}{j-1}\right)^{\beta}$,
is absolutely $(C, \alpha)$-Ces\`{a}ro bounded on $\ell^p(\NN)$.
\end{theorem}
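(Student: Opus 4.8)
The plan is to reduce the vector estimate to a single scalar (one basis vector) computation by means of Jensen's inequality, and then to carry out that scalar computation explicitly; the strict hypothesis $\beta<\alpha/p$ will enter exactly at the convergence of one elementary sum. First I would record the action of the powers of $T$: since the weights telescope,
$$T^n e_j = (w_j w_{j-1}\cdots w_{j-n+1})\, e_{j-n} = \left(\frac{j}{j-n}\right)^{\beta} e_{j-n}\qquad (j>n),$$
and $T^n e_j = 0$ for $j\le n$ (so in particular $\|T^n\|=(n+1)^{\beta}$). Writing $x=\sum_m \alpha_m e_m$, the vectors $\{T^j e_m\}_m$ are supported on pairwise distinct coordinates, whence
$$\|T^j x\|_p^p = \sum_{m>j} |\alpha_m|^p\left(\frac{m}{m-j}\right)^{\beta p},$$
a quantity depending only on the moduli $|\alpha_m|$, so there is no loss in taking $\alpha_m\ge 0$.

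Then, since $\sum_{j=0}^n k^{\alpha}(n-j)=(k^{\alpha}*k^{1})(n)=k^{\alpha+1}(n)$ and $t\mapsto t^{1/p}$ is concave, I would apply Jensen's inequality to the probability weights $k^{\alpha}(n-j)/k^{\alpha+1}(n)$ to obtain, with $1/p+1/p'=1$,
$$\sum_{j=0}^n k^{\alpha}(n-j)\|T^j x\|_p \le k^{\alpha+1}(n)^{1/p'}\left(\sum_{j=0}^n k^{\alpha}(n-j)\|T^j x\|_p^p\right)^{1/p}.$$
Interchanging the order of summation in the last factor turns it into a sum over output coordinates,
$$\sum_{j=0}^n k^{\alpha}(n-j)\|T^j x\|_p^p = \sum_{m\ge 1} |\alpha_m|^p\, H_m(n),\qquad H_m(n):=\sum_{j=0}^{\min(n,m-1)} k^{\alpha}(n-j)\left(\frac{m}{m-j}\right)^{\beta p}.$$
The whole statement then reduces to the uniform scalar bound $\sup_{m\ge 1} H_m(n)\le C\,k^{\alpha+1}(n)$: granting it, one gets $\sum_m|\alpha_m|^p H_m(n)\le C k^{\alpha+1}(n)\|x\|_p^p$, and substituting back gives $\sum_{j=0}^n k^{\alpha}(n-j)\|T^j x\|_p\le C^{1/p} k^{\alpha+1}(n)\|x\|_p$, which is precisely absolute $(C,\alpha)$-Ces\`aro boundedness after dividing by $k^{\alpha+1}(n)$.

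The main obstacle is the scalar estimate on $H_m(n)$. Note that $H_m(n)$ is nothing but the absolute $(C,\alpha)$-Ces\`aro sum of the single sequence $j\mapsto (m/(m-j))^{\beta p}$, i.e. the weighted shift with exponent $\beta p$ evaluated on $e_m$. Using $k^{\alpha}(i)\asymp (i+1)^{\alpha-1}$ from \eqref{asymp}--\eqref{gamma}, I would split into the cases $m>n$ and $m\le n$ and substitute $i=n-j$ (respectively $l=m-j$) so as to reduce $H_m(n)$ to elementary sums of the type $\sum_{l=1}^m l^{-\beta p}\asymp m^{1-\beta p}$ and $\sum_{i=1}^n i^{\alpha-1-\beta p}\asymp n^{\alpha-\beta p}$. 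The first is summable to that order precisely when $\beta p<1$, and the second when $\alpha-1-\beta p>-1$, i.e. $\beta p<\alpha$; both are guaranteed by the strict hypothesis $\beta<\alpha/p$ together with $\alpha\le 1$. A short case analysis (comparing $m$ with $n$ and with $n-m$) then yields $H_m(n)\lesssim n^{\alpha}\asymp k^{\alpha+1}(n)$ uniformly in $m$ and $n$, closing the argument.

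I expect the strictness of $\beta<\alpha/p$ to be indispensable exactly at this last point: at the endpoint $\beta p=\alpha$ the critical sum degenerates to $\sum_{i} i^{-1}\asymp \log n$, which would spoil the clean $O(n^{\alpha})$ bound for $H_m(n)$ and hence the uniform constant $C$. Everything else (the telescoping, Jensen, and the interchange of summation) is robust and does not use the inequality in any sharp way.
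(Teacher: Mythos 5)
Your proposal is correct and follows essentially the same route as the paper's own proof: the same Jensen reduction with the probability weights $k^{\alpha}(n-j)/k^{\alpha+1}(n)$, the same interchange of summation to pass to the coordinate-wise sums (your $H_m(n)$), and the same scalar power-sum estimates, with the strict inequality $\beta p<\alpha$ entering exactly where you say. The ``short case analysis'' you leave implicit is carried out in the paper as the three-way split $m\le n$, $n<m\le 2n$, $m>2n$ (in your notation), where the last range uses $\bigl(\tfrac{m}{m-j}\bigr)^{\beta p}\le 2^{\beta p}$ and the first two use the Gautschi bound \eqref{gamma} together with the integral comparison $\sum_{l=1}^{m} l^{\varepsilon-1}\le 1+m^{\varepsilon}/\varepsilon$ for $\varepsilon:=\alpha-\beta p>0$.
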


\begin{proof}
Denote $ \varepsilon := \alpha-\beta p>0$. Let $x\in \ell^p(\NN)$ whose Fourier representation is $x:=\displaystyle \sum_{j=1}^\infty \alpha_j e_j .$ We assume without loss of generality that $\lVert x\rVert_p=1.$ For $N\in\NN$ we have

\begin{eqnarray}
 \sum_{n=0}^N k^{\alpha}(N-n)\|T^nx\|_p^p &=& \sum_{n=0}^N k^{\alpha}(N-n)\sum_{j=n+1}^\infty |\alpha_j|^p\Big(\frac{j}{j-n}\Big)^{\alpha-\varepsilon}   \nonumber \\
 &=& \sum_{j=1}^\infty |\alpha_j|^p\,  \sum_{n=0}^{\min\{N,\;j-1\}}k^{\alpha}(N-n)\Big(\frac{j}{j-n}\Big)^{\alpha-\varepsilon} \nonumber \\
 &=& \sum_{j=1}^{N} |\alpha_j|^p\, j^{\alpha-\varepsilon} \sum_{n=0}^{j-1}k^{\alpha}(N-n)({j-n})^{\varepsilon-\alpha} \\
 &+& \sum_{j=N+1}^{2N} |\alpha_j|^p\, j^{\alpha-\varepsilon} \sum_{n=0}^{N}k^{\alpha}(N-n)({j-n})^{\varepsilon-\alpha} \\
  &+& \sum_{j=2N+1}^\infty |\alpha_j|^p \sum_{n=0}^N k^{\alpha}(N-n)\Big(\frac{j}{j-n}\Big)^{\alpha- \varepsilon}.
 \end{eqnarray}
In what follows, we estimate each of the above summands.

First, notice that for $j\geq 2N+1$ and $n\leq N,$ one gets
$$
\left( \frac{j}{j-n} \right)^{\alpha-\varepsilon}\leq 2^{\alpha-\varepsilon}<2^{\alpha}\;.
$$
Hence
$$
(5)<2^{\alpha}k^{\alpha+1}(N)\sum_{j=2N+1}^\infty  |\alpha _j|^p\leq 2^{\alpha}k^{\alpha+1}(N) \;.
$$

Secondly, taking $j\le N$ to estimate the summand (3), we obtain
\begin{eqnarray*}
\sum_{n=0}^{j-1}k^{\alpha}(N-n)({j-n})^{\varepsilon-\alpha} &= & \sum_{n=1}^{j}k^{\alpha}(N-j+n){n}^{\varepsilon-\alpha} \\
&\leq &  \frac{1}{\Gamma(\alpha)}\sum_{n=1}^{j}(N-j+n)^{\alpha-1}{n}^{\varepsilon-\alpha} \\
&\le & \frac{1}{\Gamma(\alpha)}\sum_{n=1}^{j} n^{\varepsilon -1} < \frac{1}{\Gamma(\alpha)}( 1+ \int _1^{j} x^{\varepsilon -1}dx) \\
&=& \frac{1}{\Gamma(\alpha)}\frac{j^\varepsilon }{\varepsilon},
\end{eqnarray*}
where we have used \eqref{gamma}.

Now, in order to estimate (4) we take $j> N,$ and we have
\begin{eqnarray*}
\sum_{n=0}^{N}k^{\alpha}(N-n)({j-n})^{\varepsilon-\alpha} &= & \sum_{n=0}^{N}k^{\alpha}(n)({j-N+n})^{\varepsilon- \alpha} \\
&\leq &\frac{1}{\Gamma(\alpha)}\sum_{n=1}^{N}n^{\alpha -1}({j-N+n})^{\varepsilon- \alpha} +1\\
&\le&\frac{1}{\Gamma(\alpha)}\sum_{n=1}^{j-1} n^{\varepsilon -1} +1 < \frac{1}{\Gamma(\alpha)}( 1+ \int _1^{j-1} x^{\varepsilon -1}dx) +1\\
&=& \frac{1}{\Gamma(\alpha)}\frac{(j-1)^\varepsilon}{\varepsilon}+1 <\frac{1}{\Gamma(\alpha)}\frac{j^\varepsilon }{\varepsilon} +1\;.
\end{eqnarray*}

Using the previous estimates we get
\begin{eqnarray*}
\sum_{n=0}^N k^{\alpha}(N-n)\| T^nx\|^p_p
&\leq & \sum_{j=1}^{N} |\alpha _j|^pj^{\alpha-\varepsilon} \frac{1}{\Gamma(\alpha)}\frac{j^\varepsilon }{\varepsilon}\\
&& + \sum_{j=N}^{2N} |\alpha _j|^pj^{\alpha-\varepsilon}(1 + \frac{ j^{\varepsilon }}{\Gamma(\alpha)\varepsilon})+ 2^{\alpha}k^{\alpha+1}(N) \\
&=&  \sum_{j=1}^{2N} |\alpha _j|^p(1 +\frac{1}{\Gamma(\alpha)\varepsilon})j^{\alpha} +2^{\alpha}k^{\alpha+1}(N) \\
&\leq& (1+\frac{1}{\Gamma(\alpha)\varepsilon}) (2N)^{\alpha }\sum_{j=1}^{2N} |\alpha _j|^p + 2^{\alpha}k^{\alpha+1}(N)\\
 &\leq&  C(\alpha, \varepsilon) k^{\alpha+1}(N),
\end{eqnarray*}
and by Jensen's inequality
$$
\left( \frac{1}{k^{\alpha+1}(N)} \sum_{n=0}^N k^{\alpha}(N-n)\|T^nx\|_p\right)^p \leq \frac{1}{k^{\alpha+1}(N)} \sum_{n=0}^N k^{\alpha}(N-n)\| T^nx\|_p^p\leq  C(\alpha, \varepsilon).
$$

\end{proof}

Recall that a bounded linear operator $T$ on $X$ is \emph{topologically mixing} if for any pair $U,V$ of non-empty
open subsets of $X$, there exists $n_0 \in \NN$ such that $T^n(U) \cap V \neq \emptyset$ for all $n \geq n_0$. Therefore, we deduce the following result.

\begin{corollary}\label{mixing}
Let $1\leq p<\infty$  and $0<\varepsilon<\alpha\le 1.$ There exists an absolutely $(C, \alpha)$-Ces\`{a}ro bounded operator $T$ on $\ell^p(\NN)$ such that it is mixing  and $\|T^n\| =(n+1)^{\frac{(\alpha-\varepsilon)}{p}}$.
\end{corollary}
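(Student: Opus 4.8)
The plan is to read off the desired operator directly from the family constructed in Theorem \ref{ejemplos}, so that the heavy analytic work is already done, and then to verify only the two remaining assertions --- the exact norm growth and mixing --- by elementary manipulations of the weights. Given $0<\varepsilon<\alpha\le 1$, I would put $\beta:=\frac{\alpha-\varepsilon}{p}$. Since $0<\varepsilon<\alpha$ forces $0<\alpha-\varepsilon<\alpha$, we get $0<\beta<\frac{\alpha}{p}$, so Theorem \ref{ejemplos} applies and the unilateral weighted backward shift $T$ with weights $w_j=(j/(j-1))^{\beta}$ is absolutely $(C,\alpha)$-Ces\`aro bounded on $\ell^p(\NN)$. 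It then remains to compute $\|T^n\|$ and to check that $T$ is mixing.

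For the norm I would exploit that the weights telescope. For $j>n$ one has $T^n e_j=\bigl(\prod_{k=j-n+1}^{j} w_k\bigr)e_{j-n}$ and $T^n e_j=0$ for $j\le n$, while
\[
\prod_{k=j-n+1}^{j} w_k=\Bigl(\prod_{k=j-n+1}^{j}\frac{k}{k-1}\Bigr)^{\beta}=\Bigl(\frac{j}{j-n}\Bigr)^{\beta}.
\]
Thus $T^n$ is itself a backward shift of step $n$ with coefficients $(j/(j-n))^{\beta}$, and on $\ell^p$ its operator norm equals the supremum of these coefficients over $j>n$. Since $j\mapsto j/(j-n)=1+n/(j-n)$ is decreasing, the supremum is attained at the first nonzero coordinate $j=n+1$, giving $\|T^n\|=(n+1)^{\beta}=(n+1)^{(\alpha-\varepsilon)/p}$, exactly as claimed.

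For mixing I would invoke the classical criterion for unilateral weighted backward shifts, which is mixing on $\ell^p(\NN)$ precisely when the products of the weights diverge to infinity. Here $\prod_{k=2}^{n+1} w_k=\bigl(\prod_{k=2}^{n+1} k/(k-1)\bigr)^{\beta}=(n+1)^{\beta}\to\infty$ because $\beta>0$, so the hypothesis holds. Equivalently, one can verify Kitai's criterion directly on the dense set of finitely supported sequences: $T^n x=0$ eventually for such $x$, the map $S e_j:=w_{j+1}^{-1}e_{j+1}$ is a right inverse of $T$, and $\|S^n e_j\|=(j/(j+n))^{\beta}\to 0$. Either route yields mixing.

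I expect no genuine obstacle here, since the only nontrivial estimate --- absolute $(C,\alpha)$-boundedness --- is discharged by Theorem \ref{ejemplos}. The two remaining verifications reduce to a telescoping product for the norm and a one-line check of the weight-product (or Kitai) criterion. The single point requiring care is the index bookkeeping in $\prod_{k=j-n+1}^{j}$ and the resulting normalization: the supremum being attained at $j=n+1$ is exactly what produces the $(n+1)$, rather than $n$, in the exponent.
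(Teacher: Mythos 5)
Your proposal is correct and follows essentially the same route as the paper: instantiate Theorem \ref{ejemplos} with $\beta=\frac{\alpha-\varepsilon}{p}$, read off $\|T^n\|=(n+1)^{(\alpha-\varepsilon)/p}$ from the telescoping weight products, and obtain mixing from the weight-product criterion for unilateral weighted backward shifts (the paper cites this as \cite[Theorem 4.8]{GEP11}, checking $\bigl(\prod_{k}w_k\bigr)^{-1}\to 0$). Your explicit norm computation and the alternative Kitai-type verification are details the paper leaves implicit, but they do not change the substance of the argument.
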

\begin{proof}
By  Theorem \ref{ejemplos} we have that the unilateral weighted backward shift operator on $\ell ^p(\NN)$ is absolutely $(C, \alpha)$-Ces\`{a}ro bounded and
\begin{equation}\label{ejl}
\| T^n\|=(n+1)^{\frac{(\alpha-\varepsilon)}{p}} \;.
\end{equation}
Moreover, by \cite[Theorem 4.8]{GEP11} we have that $T$ is mixing since $\left( \prod_{k=1}^n w_k \right)^{-1} \to 0$ as $n\to\infty$.
\end{proof}

The unilateral weighted backward shift operator on $\ell ^p(\NN),$ with $\beta=\frac{1}{p},$ given in Theorem \ref{ejemplos}, is not Ces\`aro bounded, and so it is not $(C,\alpha)$-Ces\`aro bounded for $0<\alpha<1,$ see \cite{BermBMP}. It is natural to ask if this operator is $(C,\alpha)$-Ces\`aro bounded for some $\alpha>1.$ The answer is negative as the following proposition shows.

\begin{proposition} \label{propos 2.1} The unilateral weighted backward shift operator $T$ on $\ell ^p(\NN)$ with $1\leq p<\infty,$ given by $Te_1:=0$ and $Te_j:=w_je_{j-1}$ for $j>1,$ with $w_j:=\displaystyle \left( \frac{j}{j-1}\right)^{1/p},$ is not $(C, \alpha)$-Ces\`{a}ro bounded for any $\alpha$.
\end{proposition}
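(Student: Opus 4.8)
The plan is to show directly that the Cesàro means $M_T^{\alpha}(N)$ have operator norm tending to infinity as $N\to\infty$, for every $\alpha>0$, which is incompatible with $(C,\alpha)$-boundedness. The starting point is the explicit form of the powers of $T$: since the weights telescope, for $n<j$ one gets $T^n e_j=\bigl(\tfrac{j}{j-n}\bigr)^{1/p}e_{j-n}$ (and $T^n e_j=0$ for $n\ge j$), whence $\|T^n\|=(n+1)^{1/p}$. In particular the case $\alpha=0$ (power boundedness) fails at once, since $\|T^n\|\to\infty$. I stress that the growth-order results, Corollaries \ref{ACB} and \ref{On}, only exclude $(C,\alpha)$-boundedness in the range $\alpha\le 1/p$ (there $(n+1)^{1/p}$ is not $o(n^{\alpha})$), and a short computation shows that testing $M_T^{\alpha}(N)$ against a single basis vector $e_m$ produces a blow-up only in that same range; hence, to cover all $\alpha$, one must test against a vector spread over many coordinates.

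First I would fix $N$ and take the flat test vector $u:=\sum_{j=1}^{2N}e_j$, so that $\|u\|_p^p=2N$. Using the formula for $T^n$, the $i$-th coordinate of $M_T^{\alpha}(N)u$ equals $\frac{1}{k^{\alpha+1}(N)}\sum_{n=0}^{\min\{N,\,2N-i\}}k^{\alpha}(N-n)\bigl(\tfrac{i+n}{i}\bigr)^{1/p}$, and for $1\le i\le N$ the upper limit is simply $N$. The key device is to retain only the indices $n\in[N/2,N]$: there $\bigl(\tfrac{i+n}{i}\bigr)^{1/p}\ge\bigl(\tfrac{N}{2i}\bigr)^{1/p}$, while $\sum_{n=\lceil N/2\rceil}^{N}k^{\alpha}(N-n)=\sum_{\ell=0}^{\lfloor N/2\rfloor}k^{\alpha}(\ell)=k^{\alpha+1}(\lfloor N/2\rfloor)$ by the convolution identity $k^{\alpha}*k^{1}=k^{\alpha+1}$. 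This yields, for every $1\le i\le N$, the lower bound $(M_T^{\alpha}(N)u)_i\ge\bigl(\tfrac{N}{2i}\bigr)^{1/p}\,\dfrac{k^{\alpha+1}(\lfloor N/2\rfloor)}{k^{\alpha+1}(N)}$.

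By the asymptotics \eqref{asymp} the ratio $k^{\alpha+1}(\lfloor N/2\rfloor)/k^{\alpha+1}(N)$ converges to $2^{-\alpha}$, so it is bounded below by a positive constant $c_{\alpha}$ for large $N$. Hence $(M_T^{\alpha}(N)u)_i\ge c_{\alpha,p}\,(N/i)^{1/p}$ on $1\le i\le N$, and summing the $p$-th powers over these coordinates gives $\|M_T^{\alpha}(N)u\|_p^p\ge c_{\alpha,p}^p\,N\sum_{i=1}^{N}\tfrac1i\ge c_{\alpha,p}^p\,N\log N$. Dividing by $\|u\|_p^p=2N$ shows $\|M_T^{\alpha}(N)\|^p\ge\tfrac12 c_{\alpha,p}^p\log N\to\infty$, so $T$ is not $(C,\alpha)$-Cesàro bounded; as $\alpha>0$ is arbitrary, the proposition follows.

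The main obstacle is uniformity in $\alpha$: the naive choices (a single $e_m$, or a pure power $u_i=i^{-\gamma}$) fail once $\alpha\ge 1/p$, because the row sums of $M_T^{\alpha}(N)$ tend to $1$ as $i\to\infty$ and the true blow-up is hidden in the low-index coordinates. The flat vector on $[1,2N]$ together with the truncation $n\in[N/2,N]$ is what overcomes this: it simultaneously captures a large shift factor $(N/i)^{1/p}$ on the small coordinates and a fixed fraction of the kernel mass, after which the surviving harmonic sum $\sum_{i\le N}1/i$ supplies a logarithmic divergence that is independent of $\alpha$. The only quantitative input is the kernel-ratio estimate, for which \eqref{asymp} suffices.
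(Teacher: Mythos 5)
Your proof is correct, and at its core it is the same mechanism as the paper's: a flat test vector spread over $\sim N$ coordinates, a truncation that retains a constant fraction of the kernel mass via the convolution identity $k^{\alpha}*k^{1}=k^{\alpha+1}$, the asymptotics \eqref{asymp} to bound the ratio $k^{\alpha+1}(N/2)/k^{\alpha+1}(N)$ from below, and a harmonic sum $\sum_{i\le N}1/i$ supplying the $\ln N$ divergence. The one genuine difference is where the truncation is performed. The paper restricts the \emph{coordinate} index $j$ to $[N/2+1,N+1]$ and then needs to replace $k^{\alpha}(N+l-j)$ by $k^{\alpha}(N+1-j)$, which requires the monotonicity of $k^{\alpha}$ valid only for $\alpha>1$; consequently the paper's direct computation covers only $\alpha>1$, and the range $0<\alpha\le 1$ is disposed of beforehand by citing the known failure of Ces\`aro boundedness for $\beta=1/p$ from \cite{BermBMP} together with the implication $(C,\alpha)\Rightarrow(C,1)$ for $\alpha<1$. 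You instead truncate the \emph{power} index $n$ to $[N/2,N]$, so the kernel sum $\sum_{n=\lceil N/2\rceil}^{N}k^{\alpha}(N-n)$ is computed exactly by the convolution identity with no monotonicity needed; this makes your single computation uniform in all $\alpha>0$ (and you handle $\alpha=0$ trivially via $\|T^n\|=(n+1)^{1/p}$), which is a mild but real simplification: it is self-contained and does not lean on \cite{BermBMP} or on the comparison between Ces\`aro orders.
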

\begin{proof}
By the above comment it is enough to prove the result for $\alpha>1.$ Let $N$ an even natural number. We define $y_{N+1}:=\frac{1}{(N+1)^{1/p}}\displaystyle\sum_{l=0}^{N+1}e_{l}.$ Then

\begin{displaymath}\begin{array}{l}
\displaystyle\|\frac{1}{k^{\alpha+1}(N)}\sum_{j=0}^N k^{\alpha}(N-j)T^jy_{N+1}\|_p^p\\
\displaystyle=\|\frac{1}{k^{\alpha+1}(N)(N+1)^{1/p}}\sum_{l=1}^{N+1}\biggl(\sum_{j=l}^{N+1} k^{\alpha}(N+l-j)(j/l)^{1/p}\biggr)e_{l}\|_p^p\\
\displaystyle=\frac{1}{(k^{\alpha+1}(N))^p(N+1)}\sum_{l=1}^{N+1}\frac{1}{l}\biggl(\sum_{j=l}^{N+1} k^{\alpha}(N+l-j)j^{1/p}\biggr)^p\\
\displaystyle\geq  \frac{1}{(k^{\alpha+1}(N))^p(N+1)}\sum_{l=1}^{N/2+1}\frac{1}{l}\biggl(\sum_{j=N/2+1}^{N+1} k^{\alpha}(N+l-j)j^{1/p}\biggr)^p \\
\displaystyle\geq  \frac{1}{(k^{\alpha+1}(N))^p(N+1)}\sum_{l=1}^{N/2+1}\frac{1}{l}\biggl(\sum_{j=N/2+1}^{N+1} k^{\alpha}(N+1-j)j^{1/p}\biggr)^p \\
\displaystyle=  \frac{1}{(k^{\alpha+1}(N))^p(N+1)}\sum_{l=1}^{N/2+1}\frac{1}{l}\biggl(\sum_{j=0}^{N/2} k^{\alpha}(N/2-j)(j+N/2+1)^{1/p}\biggr)^p \\
\displaystyle\geq  \frac{1}{(k^{\alpha+1}(N))^p(N+1)}\sum_{l=1}^{N/2+1}\frac{1}{l}\biggl(\sum_{j=0}^{N/2} k^{\alpha}(N/2-j)(N/2)^{1/p}\biggr)^p \\
\displaystyle\geq  C \sum_{l=1}^{N/2+1}\frac{1}{l}\geq C\ln (N/2+1),
\end{array}\end{displaymath}
with $C$ a positive constant, where we have used that $k^{\alpha}$ is increasing as function of $n$ for $\alpha >1,$ identity \eqref{asymp} and $k^{\alpha}*k^1=k^{\alpha+1}$. So, we conclude that the Ces\`aro mean of order $\alpha$ of $T$ is not uniformly bounded.
\end{proof}

\begin{remark}\label{remark} Let $0<\alpha \le 1$, $1\leq p<\infty,$ $\beta>0$ and $T$ be the unilateral weighted backward shift operator on $\ell ^p(\NN)$  defined in Theorem \ref{ejemplos}. Then
the operator $T$ is  absolutely $(C,\alpha)$-Ces\`aro bounded if $\beta<\alpha/p$ (Theorem \ref{ejemplos}) and no $(C,\alpha)$-Ces\`aro bounded if $\beta\geq 1/p$ (Proposition \ref{propos 2.1}).
\end{remark}

From now to the end of the section we focus on studying the growth of the natural powers of absolutely $(C,\alpha)$-Ces\`aro bounded operators.

\begin{theorem}\label{residual}
Let $0<\alpha \le 1,$ $X$ be a Banach space and $T\in B(X)$ satisfying $\|T^n\|=O(n^{\alpha}).$ Then either $\displaystyle \|T^n\|=o(n^{\alpha})$ or the set
$$
\Bigl\{x\in X: \sup_N \frac{1}{k^{\alpha+1}(N)}\sum _{j=0}^N  k^{\alpha}(N-j) \|T^jx\|=\infty\Bigr\}
$$
is residual in $X$.
\end{theorem}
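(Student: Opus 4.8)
The plan is to run a Baire category dichotomy on the family of seminorms naturally attached to the absolute $(C,\alpha)$-Ces\`aro sums, and then to upgrade the ``bounded'' alternative from an $O(n^\alpha)$ to an $o(n^\alpha)$ estimate by spreading a single large orbit value backwards to create a logarithmic blow-up.

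First I would set, for each $N\in\N_0$,
$$
p_N(x):=\frac{1}{k^{\alpha+1}(N)}\sum_{j=0}^N k^{\alpha}(N-j)\,\|T^jx\|,\qquad x\in X.
$$
Each map $x\mapsto\|T^jx\|$ is continuous and the sum is finite, so every $p_N$ is a continuous seminorm, and the exceptional set in the statement is exactly $E:=\{x:\sup_N p_N(x)=\infty\}$. Writing $F_k:=\bigcap_N\{x:p_N(x)\le k\}$, each $F_k$ is closed and $X\setminus E=\bigcup_k F_k$. Here the standard seminorm version of Banach--Steinhaus applies: if some $F_k$ has nonempty interior $B(x_0,r)$, then for $\|y\|<r$ the triangle inequality gives $p_N(y)\le p_N(x_0+y)+p_N(x_0)\le 2k$ for all $N$, and homogeneity upgrades this to $\sup_N p_N(x)\le (2k/r)\|x\|$ for every $x$, i.e.\ $T$ is absolutely $(C,\alpha)$-Ces\`aro bounded; otherwise every $F_k$ is nowhere dense, $X\setminus E$ is meager, and $E$ is residual, which is the second alternative.

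The remaining, and genuinely delicate, point is that the first alternative forces $\|T^n\|=o(n^\alpha)$. I would argue by contradiction: assume $\sup_N p_N(x)\le C\|x\|$ yet $\|T^{n_k}\|\ge\delta\,n_k^{\alpha}$ along some $n_k\to\infty$, and pick unit vectors $y_k$ with $\|T^{n_k}y_k\|\ge\tfrac{\delta}{2}n_k^{\alpha}$. For $0\le j<n_k$ the factorisation $T^{n_k}=T^{\,n_k-j}T^{j}$ and the hypothesis $\|T^m\|\le C'm^{\alpha}$ force every nearby orbit value to be large:
$$
\|T^jy_k\|\ \ge\ \frac{\|T^{n_k}y_k\|}{\|T^{\,n_k-j}\|}\ \ge\ \frac{\delta\,n_k^{\alpha}}{2C'(n_k-j)^{\alpha}}.
$$
Substituting into the sum at level $N=n_k$, using $k^{\alpha}(m)\ge (m+1)^{\alpha-1}/\Gamma(\alpha)\ge 2^{\alpha-1}m^{\alpha-1}/\Gamma(\alpha)$ from \eqref{gamma} and setting $m=n_k-j$, I would get
$$
\sum_{j=0}^{n_k} k^{\alpha}(n_k-j)\,\|T^jy_k\|\ \ge\ \frac{\delta\,2^{\alpha-1}}{2C'\Gamma(\alpha)}\,n_k^{\alpha}\sum_{m=1}^{n_k}\frac1m\ \ge\ c\,n_k^{\alpha}\ln n_k .
$$
But absolute $(C,\alpha)$-Ces\`aro boundedness and \eqref{asymp} bound the very same sum by $C\,k^{\alpha+1}(n_k)=O(n_k^{\alpha})$, so $c\,\ln n_k=O(1)$, which is impossible. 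Hence the first alternative yields $\|T^n\|=o(n^\alpha)$, completing the dichotomy.

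I expect the main obstacle to be exactly this last estimate, and the role of the standing hypothesis $\|T^n\|=O(n^\alpha)$ to be precisely to make the backward-spreading step work: it is what converts one large value $\|T^{n_k}y_k\|\sim n_k^\alpha$ into the pointwise lower bound $\|T^jy_k\|\gtrsim n_k^\alpha/(n_k-j)^\alpha$, and it is the ensuing harmonic sum $\sum_{m}1/m\sim\ln n_k$ that opens a genuine logarithmic gap against the $O(n_k^\alpha)$ ceiling imposed by boundedness. In writing this up I would be careful to treat the endpoint term $j=n_k$ (where $m=0$) separately — it is nonnegative and can simply be discarded from the lower sum — and to record that $C'$ in $\|T^m\|\le C'm^\alpha$ is a single constant valid for all $m\ge1$, which is exactly what $\|T^n\|=O(n^\alpha)$ provides.
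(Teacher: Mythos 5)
Your proof is correct, and its analytic core is identical to the paper's: both arguments spread a single large orbit value backwards via $\|T^jy\|\ge \|T^{N}y\|/\|T^{N-j}\|$ (valid thanks to the standing hypothesis $\|T^n\|=O(n^{\alpha})$), use \eqref{gamma} in the form $k^{\alpha}(m)\ge 2^{\alpha-1}m^{\alpha-1}/\Gamma(\alpha)$, and extract the harmonic divergence $\sum_{m=1}^{N}1/m\ge\ln N$ against the $O(N^{\alpha})$ ceiling coming from $k^{\alpha+1}(N)$ --- even the constant $2^{\alpha-1}/\Gamma(\alpha)$ coincides with the paper's $1/(\Gamma(\alpha)2^{1-\alpha})$. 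The only genuine difference is in how the Baire category machinery is organized. The paper assumes $\|T^n\|\neq o(n^{\alpha})$ and proves residuality directly: each open set $M_s=\{x:\sup_N p_N(x)>s\}$ is shown to contain a unit vector (by the log estimate applied at a single large $N$) and to be dense, the density coming from the condensation-of-singularities trick $\|T^j(y+\varepsilon x)\|+\|T^j(y-\varepsilon x)\|\ge 2\varepsilon\|T^jx\|$ with $x$ a unit vector in $M_{s'}$, $s'>s/\varepsilon$; Baire applied to the sets $M_s$ then gives the theorem. You instead run the uniform boundedness dichotomy on the closed sets $F_k=\bigcap_N\{p_N\le k\}$: either some $F_k$ has nonempty interior, which you correctly upgrade to absolute $(C,\alpha)$-Ces\`aro boundedness and then refute any subsequence with $\|T^{n_k}\|\ge\delta n_k^{\alpha}$ by the same logarithmic contradiction, or all $F_k$ are nowhere dense and the blow-up set is residual. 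The two organizations are logically equivalent; yours has the merit of isolating the bounded horn of the dichotomy as exactly absolute $(C,\alpha)$-Ces\`aro boundedness, so that the implication of Corollary \ref{ACB} appears as a visible intermediate step, while the paper's version keeps everything inside one application of Baire to open dense sets without invoking the seminorm form of Banach--Steinhaus. Your handling of the endpoint term $j=n_k$ and of the uniform constant $C'$ is also consistent with the paper's treatment, so there is no gap.
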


\begin{proof}
Suppose that $\frac{\|T^n\|}{n^{\alpha}}\not\to 0$ as $n\to\infty.$ So, there exists a positive constant $c$ such that
$$
\limsup_{n\to\infty}n^{-\alpha}\|T^n\|>c.
$$
Also, by hypothesis, there is $C>0$ such that $\|T^n\|\leq Cn^{\alpha}$ for all $n\in\N.$

For $s\in\NN$ we denote
$$
M_s=\Bigl\{x\in X: \sup_N \frac{1}{k^{\alpha+1}(N)}\sum _{j=0}^N  k^{\alpha}(N-j) \|T^jx\|>s\Bigr\}.
$$
It is clear that $M_s$ is  an open set.

First, we show that $M_s$ contains a unit vector for $s\in\NN$. Note that there exists $N\in\N$ enough large such that $\frac{cN^{\alpha}}{k^{\alpha+1}(N)}\big(\frac{1}{C\Gamma(\alpha)2^{1-\alpha}}\ln N+1\big)>s$ and $\|T^N\|>cN^{\alpha}$. Therefore we can take a unit vector $x\in X$ such that $\|T^Nx\|> cN^{\alpha}$. Using that $\|T^Nx\|\le \|T^j\|\cdot\|T^{N-j}x\|$ for $j=1,\dots,N-1,$ one gets
$$
\|T^{N-j}x\|\ge\frac{\|T^Nx\|}{\|T^j\|}\ge\frac{cN^{\alpha}}{Cj^{\alpha}}.
$$
Thus,
\begin{eqnarray*}
\frac{1}{k^{\alpha+1}(N)}\sum _{j=0}^N  k^{\alpha}(N-j) \|T^jx\|&\ge&
\frac{1}{k^{\alpha+1}(N)}\big(\sum _{j=0}^{N-1}  k^{\alpha}(N-j) \frac{cN^{\alpha}}{C(N-j)^{\alpha}}+cN^{\alpha}\big)\\
&\ge&\frac{cN^{\alpha}}{k^{\alpha+1}(N)}\big(\frac{1}{C}\sum _{j=1}^N  k^{\alpha}(j) \frac{1}{j^{\alpha}}+1\big) \\
&\ge& \frac{cN^{\alpha}}{k^{\alpha+1}(N)}\big(\frac{1}{C\Gamma(\alpha)2^{1-\alpha}}\sum _{j=1}^N  \frac{1}{j}+1 \big)\\
&\ge& \frac{cN^{\alpha}}{k^{\alpha+1}(N)}\big(\frac{1}{C\Gamma(\alpha)2^{1-\alpha}}\ln N+1\big)>s,
\end{eqnarray*}
and so $x\in M_s$.

Now, we prove that $M_s$ is dense for each $s\in\NN.$ Indeed, let $y\in X$ and $\e>0$. Let $s'>\frac{s}{\e},$ and we take $x\in M_{s'}$ with $\|x\|=1$. For each $j\in\NN$ we have
$$
\|T^j(y+\e x)\|+\|T^j(y-\e x)\|\ge
2\e \|T^jx\|,
$$
 and then
\begin{eqnarray*}
\frac{1}{k^{\alpha+1}(N)}\sum _{j=0}^N  k^{\alpha}(N-j) \|T^j(y+\e x)\|&+&
\frac{1}{k^{\alpha+1}(N)}\sum _{j=0}^N  k^{\alpha}(N-j) \|T^j(y-\e x)\| \\&\ge&
\frac{2\e}{k^{\alpha+1}(N)}\sum _{j=0}^N  k^{\alpha}(N-j) \|T^jx\|\\
&>&2\e s'>2s.
\end{eqnarray*}
Hence either $y+\e x\in M_s$ or $y-\e x\in M_s$. Since $\e>0$ was arbitrary, we conclude that $M_s$ is dense.

The Baire category theorem implies that
$$
\bigcap_{s+1}^\infty M_s=\Bigl\{x\in X: \sup_N \frac{1}{k^{\alpha+1}(N)}\sum _{j=0}^N  k^{\alpha}(N-j) \|T^jx\|=\infty\Bigr\}
$$
is a residual set.
\end{proof}

\begin{corollary}\label{ACB}
Let $0<\alpha \le 1$ and $T$ be an absolutely $(C, \alpha)$-Ces\`{a}ro bounded operator. Then $\displaystyle \|T^n\|=o(n^{\alpha})$.
\end{corollary}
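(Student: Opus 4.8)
The plan is to obtain this corollary as an immediate consequence of the dichotomy in Theorem \ref{residual}, so that essentially all the analytic work is already done. The strategy has two ingredients: first check that an absolutely $(C,\alpha)$-Ces\`aro bounded operator satisfies the growth hypothesis $\|T^n\|=O(n^\alpha)$ needed to invoke Theorem \ref{residual}, and then observe that absolute boundedness makes the exceptional set in that theorem empty, which rules out the second alternative and leaves only $\|T^n\|=o(n^\alpha)$.

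First I would verify the hypothesis $\|T^n\|=O(n^\alpha)$. Starting from the defining inequality
$$
\frac{1}{k^{\alpha+1}(n)}\sum_{j=0}^n k^{\alpha}(n-j)\|T^jx\|\le C\|x\|,
$$
I would retain only the single top term $j=n$ on the left. Since $k^{\alpha}(0)=\Gamma(\alpha)/(\Gamma(\alpha)\Gamma(1))=1$, this gives $\|T^nx\|\le C\,k^{\alpha+1}(n)\|x\|$, and by the asymptotic \eqref{asymp} one has $k^{\alpha+1}(n)=O(n^{\alpha})$. Hence $\|T^n\|=O(n^{\alpha})$, which is exactly the standing assumption of Theorem \ref{residual}. (This is the same implication recorded in the sketch following the definition of absolute $(C,\alpha)$-Ces\`aro boundedness.)

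Second, I would apply Theorem \ref{residual} and note that the set
$$
\Bigl\{x\in X:\ \sup_N \frac{1}{k^{\alpha+1}(N)}\sum_{j=0}^N k^{\alpha}(N-j)\|T^jx\|=\infty\Bigr\}
$$
is empty: the very definition of absolute $(C,\alpha)$-Ces\`aro boundedness asserts that this supremum is bounded by $C\|x\|<\infty$ for \emph{every} $x\in X$. Since $X$ is a nonempty complete metric space, the empty set is not residual, so the second alternative of Theorem \ref{residual} cannot occur. Therefore the first alternative holds, namely $\|T^n\|=o(n^{\alpha})$. I do not expect any genuine obstacle in the corollary itself; the real content lies in the Baire-category construction of Theorem \ref{residual}, and here the conclusion is a one-line deduction from that dichotomy.
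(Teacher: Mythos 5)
Your proposal is correct and follows essentially the same route as the paper: both derive $\|T^n\|=O(n^{\alpha})$ by keeping only the $j=n$ term (using $k^{\alpha}(0)=1$ and $k^{\alpha+1}(n)=O(n^{\alpha})$), and both then invoke Theorem \ref{residual}, ruling out the residual-set alternative because absolute $(C,\alpha)$-Ces\`aro boundedness makes that set empty. Your remark that an empty set cannot be residual in a nonempty complete metric space is just a slightly more explicit phrasing of the paper's statement that the second possibility contradicts the hypothesis.
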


\begin{proof}
Note that by the absolutely $(C, \alpha)$-Ces\`{a}ro boundedness of $T,$ we have
$$
\|T^nx\|\leq \sum _{j=0}^n  k^{\alpha}(n-j) \|T^jx\|\leq C k^{\alpha+1}(n)\|x\| \le C n^{\alpha}\|x\|,\quad C>0.
$$
By Theorem \ref{residual}, we conclude the result, since the second possibility in Theorem \ref{residual} contradicts the absolutely $(C,\alpha)$-Ces\`{a}ro boundedness of $T.$
\end{proof}

\begin{theorem}\label{kreiss}
Let $T$ be an absolutely $(C, \alpha)$-Ces\`{a}ro bounded  operator with $\alpha >1$. Then $T$ is Kreiss bounded.
\end{theorem}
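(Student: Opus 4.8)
The plan is to estimate the resolvent directly through its Neumann series and to convert the \emph{absolute} control on the scalar quantities $\sum_{j=0}^n k^\alpha(n-j)\|T^jx\|$ into a bound on $\sum_k|\lambda|^{-k}\|T^kx\|$ by means of a generating--function identity. The crucial decision is to work with the \emph{real} variable $z:=1/|\lambda|$ and with the nonnegative scalar sequences, rather than with $1/\lambda$ and operator--valued Cesàro sums, so that positivity is available and no cancellation is wasted.

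First I would note that $T$ is in particular $(C,\alpha)$-Cesàro bounded, so $\|T^n\|=O(n^\alpha)$: keeping only the term $j=n$ (where $k^\alpha(0)=1$) in the defining sum gives $\|T^nx\|\le\sum_{j=0}^n k^\alpha(n-j)\|T^jx\|\le C\,k^{\alpha+1}(n)\|x\|=O(n^\alpha)\|x\|$ by \eqref{asymp}. Hence for every $\lambda$ with $|\lambda|>1$ the series $(\lambda-T)^{-1}=\sum_{k=0}^\infty\lambda^{-k-1}T^k$ converges in operator norm, and the triangle inequality yields $\|(\lambda-T)^{-1}x\|\le|\lambda|^{-1}\sum_{k=0}^\infty|\lambda|^{-k}\|T^kx\|$.

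Next, fix $x$, set $z:=1/|\lambda|\in(0,1)$, and put $a_k:=\|T^kx\|\ge0$ with its Cesàro convolution $b_n:=\sum_{j=0}^n k^\alpha(n-j)a_j=(k^\alpha*a)(n)\ge0$. Using that the generating function of the kernel is $\sum_{n\ge0}k^\gamma(n)z^n=(1-z)^{-\gamma}$ for $|z|<1$ (and that convolution corresponds to the product of generating functions), the generating function of $b$ factors as $\sum_n b_nz^n=(1-z)^{-\alpha}\sum_k a_kz^k$; all three series converge for $z\in(0,1)$ since $a_k,b_n=O(n^\alpha)$. Rearranging gives the key identity $\sum_k a_kz^k=(1-z)^\alpha\sum_n b_nz^n$. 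Now absolute $(C,\alpha)$-Cesàro boundedness reads $b_n\le C\,k^{\alpha+1}(n)\|x\|$, and since every term is nonnegative and $(1-z)^\alpha>0$, I may substitute termwise:
$$\sum_{k=0}^\infty a_kz^k=(1-z)^\alpha\sum_{n=0}^\infty b_nz^n\le C\|x\|(1-z)^\alpha\sum_{n=0}^\infty k^{\alpha+1}(n)z^n=\frac{C\|x\|}{1-z}.$$

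Finally I would translate back: with $1-z=(|\lambda|-1)/|\lambda|$ this gives $\sum_k|\lambda|^{-k}\|T^kx\|\le C\|x\|\,|\lambda|/(|\lambda|-1)$, whence $\|(\lambda-T)^{-1}x\|\le|\lambda|^{-1}\cdot C\|x\|\,|\lambda|/(|\lambda|-1)=C\|x\|/(|\lambda|-1)$. Taking the supremum over unit vectors and over $|\lambda|>1$ produces exactly the resolvent estimate $\|(\lambda-T)^{-1}\|\le C/(|\lambda|-1)$ defining Kreiss boundedness. I expect the main (and essentially the only) delicate point to be this passage to the real variable $z=1/|\lambda|$: the naive attempt with $z=1/\lambda$ and the operator identity $(\lambda-T)^{-1}=z(1-z)^\alpha\sum_n(k^\alpha*\mathcal T)(n)z^n$ forces a factor $|1-z|^\alpha$ that does not tend to $0$ as $\lambda\to\partial\mathbb{D}$ away from the positive real axis, yielding the useless singularity $(|\lambda|-1)^{-(\alpha+1)}$; replacing the operator Cesàro sums by the scalar sums of norms and exploiting their positivity is precisely what restores the correct order $(|\lambda|-1)^{-1}$.
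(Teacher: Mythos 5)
Your proof is correct, but it follows a genuinely different route from the paper's. The paper's own proof is a two-line reduction: by monotonicity of absolute Ces\`aro boundedness in the order, $T$ is absolutely $(C,\beta)$-Ces\`aro bounded for every $\beta>\alpha$, and then it invokes an external resolvent estimate for higher-order Ces\`aro means (\cite[Corollary 2.3]{S1}). Your argument is instead self-contained and elementary: you pass to the scalar sequence $a_k=\|T^kx\|$, use the generating function $\sum_n k^{\gamma}(n)z^n=(1-z)^{-\gamma}$ together with positivity (Tonelli justifies the convolution identity for nonnegative terms, so even the preliminary $O(n^{\alpha})$ bound is only needed to ensure the Neumann series for the resolvent converges), and convert the hypothesis $(k^{\alpha}*a)(n)\le C k^{\alpha+1}(n)\|x\|$ into $\sum_k|\lambda|^{-k}\|T^kx\|\le C\|x\|\,|\lambda|/(|\lambda|-1)$. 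Your approach buys two things the paper's does not: it works verbatim for every $\alpha>0$, not only $\alpha>1$ (consistent with the paper's remark that absolutely Ces\`aro bounded operators are uniformly Kreiss bounded), and it proves more than the stated conclusion, since the absolute estimate $\sum_{k=0}^{n}|\lambda|^{-k-1}\|T^kx\|\le C\|x\|/(|\lambda|-1)$ immediately gives uniform Kreiss boundedness, not merely the resolvent condition. What the paper's route buys is brevity and an explicit link to the literature relating $(C,2)$-means to the Kreiss condition; your closing observation about why the complex substitution $z=1/\lambda$ fails (the factor $|1-z|^{\alpha}$ does not degenerate correctly off the positive real axis) correctly identifies why the passage to real $z=1/|\lambda|$ and scalar norms is the essential step.
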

\begin{proof}
For each $\beta>\alpha $ the operator  $T$ is an absolutely $(C, \beta)$-Ces\`{a}ro bounded  operator. Then, by \cite[Corollary 2.3]{S1} we get the result.

\end{proof}

The following corollary is a straightforward consequence of Theorem \ref{kreiss} and \cite[p.344]{SW} (or \cite[Proposition 2]{Shi}).

\begin{corollary}\label{On}
Let $T$ be an absolutely $(C, \alpha)$-Ces\`{a}ro bounded  operator with $\alpha >1$. Then $\|T^{n}\|=O(n)$
\end{corollary}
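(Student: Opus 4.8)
The plan is to read this off as an immediate consequence of the two inputs flagged in the text: Theorem \ref{kreiss} and the classical growth bound for Kreiss bounded operators. First I would invoke Theorem \ref{kreiss}: since $T$ is absolutely $(C,\alpha)$-Cesàro bounded with $\alpha>1$, that theorem gives that $T$ is Kreiss bounded, i.e. there is a constant $C>0$ with
$$
\|(\lambda-T)^{-1}\|\le\frac{C}{|\lambda|-1},\qquad |\lambda|>1.
$$
At this point the conclusion $\|T^n\|=O(n)$ is exactly the statement of \cite[p.344]{SW} (equivalently \cite[Proposition 2]{Shi}), so the corollary follows by citation. For completeness I would reproduce the short self-contained argument behind that bound, since it is the only genuine computation involved.

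That argument recovers the powers of $T$ from the resolvent by a Cauchy integral. Note first that, by the chain of implications in the introduction, $T$ is in particular $(C,\alpha)$-Cesàro bounded, hence $\|T^n\|=O(n^\alpha)$; thus $\limsup_n\|T^n\|^{1/n}\le 1$, so the spectral radius of $T$ is at most $1$. Consequently, for every $r>1$ the Neumann series $(\lambda-T)^{-1}=\sum_{k\ge 0}\lambda^{-k-1}T^k$ converges uniformly on the circle $|\lambda|=r$, and term-by-term integration yields
$$
T^n=\frac{1}{2\pi i}\int_{|\lambda|=r}\lambda^n(\lambda-T)^{-1}\,d\lambda .
$$
Taking norms and inserting the Kreiss estimate gives
$$
\|T^n\|\le\frac{1}{2\pi}\,r^n\,\frac{C}{r-1}\,(2\pi r)=\frac{C\,r^{n+1}}{r-1}.
$$

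The final step is to optimize the radius: choosing $r=1+\tfrac1n$ makes $r-1=\tfrac1n$ and $r^{n+1}=(1+\tfrac1n)^{n+1}$, which is bounded above by a universal constant (the sequence decreases to $e$), so
$$
\|T^n\|\le C\,n\,\Bigl(1+\tfrac1n\Bigr)^{n+1}\le C'\,n,
$$
that is, $\|T^n\|=O(n)$. I do not expect any serious obstacle here: the substantive work is entirely contained in Theorem \ref{kreiss} and in the cited result, and given those the only real step is the contour optimization. The one point worth checking carefully is the preliminary observation that the spectral radius is at most $1$, needed so that the circle $|\lambda|=r$ with $r>1$ encloses $\sigma(T)$ and the representation of $T^n$ is valid; as noted above this is guaranteed by the subexponential (indeed $O(n^\alpha)$) growth already established.
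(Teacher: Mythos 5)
Your proposal is correct and follows exactly the paper's route: the paper also deduces the corollary as an immediate consequence of Theorem \ref{kreiss} together with the classical Kreiss-bounded-implies-$O(n)$ result cited as \cite[p.344]{SW} (or \cite[Proposition 2]{Shi}). Your additional contour-integral argument with the radius $r=1+\tfrac1n$ is just a correct unpacking of that citation (including the needed observation that the spectral radius is at most $1$), so the two proofs coincide in substance.
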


Also, by Theorem \ref{ejemplos} we have the next result.
\begin{corollary}
Let $T$ be an  absolutely $(C, \alpha)$-Ces\`{a}ro bounded  operator with $\alpha >1$. Then $\|T^n\|=o(n)$ or $T$
is not absolutely Ces\`{a}ro bounded.
\end{corollary}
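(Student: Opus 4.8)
The plan is to read the statement as a logical dichotomy and to obtain it as the contrapositive of the $\alpha=1$ instance of Corollary \ref{ACB}. First I would make the terminology explicit: as recorded just after the Definition, \emph{absolutely Ces\`aro bounded} is synonymous with \emph{absolutely $(C,1)$-Ces\`aro bounded}, i.e. the case $\alpha=1$ of the defining inequality. Consequently the denial of the second alternative --- ``$T$ is absolutely Ces\`aro bounded'' --- is exactly the hypothesis that $T$ is absolutely $(C,1)$-Ces\`aro bounded.

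With this identification the proof is immediate. I would apply Corollary \ref{ACB} with $\alpha=1$, which is permissible since $1$ lies in its admissible range $0<\alpha\le 1$: assuming $T$ is absolutely $(C,1)$-Ces\`aro bounded, that corollary (originally \cite{BermBMP} for this case) gives $\|T^n\|=o(n^1)=o(n)$, the first alternative. Hence for \emph{any} bounded operator at least one of the two alternatives must hold, and in particular for the absolutely $(C,\alpha)$-Ces\`aro bounded operator with $\alpha>1$ in the statement. Note that the hypothesis $\alpha>1$ is not actually consumed by this argument; it only fixes the context, since for $\alpha>1$ the sharpest unconditional bound available is $\|T^n\|=O(n)$ by Corollary \ref{On}, and the present corollary refines this to $o(n)$ unless $T$ leaves the class of absolutely Ces\`aro bounded operators.

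I expect no genuine obstacle here, as the whole content is the specialization $\alpha=1$ of Corollary \ref{ACB} repackaged as an either/or statement. The only points deserving a sentence of care are the identification of ``absolutely Ces\`aro bounded'' with the $\alpha=1$ case of the definition and the verification that $\alpha=1$ is admissible in Corollary \ref{ACB}. It is worth adding that the dichotomy is not vacuous: the weighted backward shifts of Theorem \ref{ejemplos} and Corollary \ref{mixing} realize the first alternative, with $\|T^n\|=(n+1)^{\gamma}$ and $0<\gamma<1$, so that $\|T^n\|=o(n)$, whereas exhibiting the second alternative calls for a companion construction producing an absolutely $(C,\alpha)$-Ces\`aro bounded operator with $\alpha>1$ of exactly linear growth, which by the implication just proved must fail to be absolutely Ces\`aro bounded.
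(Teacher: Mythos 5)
Your proof is correct and is essentially the paper's own argument: the dichotomy is just the contrapositive reading of Corollary \ref{ACB} applied with $\alpha=1$ (equivalently, the result of \cite{BermBMP}), which gives $\|T^n\|=o(n)$ whenever $T$ is absolutely Ces\`aro bounded, the hypothesis $\alpha>1$ serving only to fix the context. (The paper's one-line lead-in cites Theorem \ref{ejemplos}, but that appears to be a mis-reference; the content actually needed is exactly the growth result you invoked.)
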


\begin{remark}
For any $0<\beta<1,$ there exists an operator on $\ell^1(\N)$ which is absolutely $(C,\alpha)$-Ces\`aro bounded for $\alpha>\beta$ (Theorem \ref{ejemplos}) and no absolutely $(C,\gamma)$-Ces\`aro bounded for $0<\gamma\le\beta$ (Corollary \ref{mixing}, Corollary \ref{ACB}).
 It would be interesting to find an example of an absolutely $(C, \alpha)$-Ces\`{a}ro bounded operator with $\alpha >1$ such that it is not absolutely Ces\`{a}ro bounded.
\end{remark}

\bigskip

%For $0<\varepsilon<\alpha \le 1$, by Theorem \ref{ejemplos} we have  a example of absolutely Ces\`{a}ro bounded operators in $\ell^2(\Bbb N)$ such that $\|T^n\|= (n+1)^{\frac{\alpha}{2}-\varepsilon}$.

In Hilbert spaces, it is known that if there exists $\varepsilon >0$ such that $T\in\mathcal{B}(X)$ satisfies $\|T^n\|\ge Cn^{\frac{1}{2}+\varepsilon}$ for all $n$, then there exists $x\in X$ such that $\|T^nx\|\rightarrow \infty$, see by \cite[Theorem 3]{MV}. Therefore $T$ is not
absolutely $(C,\alpha)$-Ces\`{a}ro bounded for any $\alpha>0$.

As consequence of Corollary \ref{ACB} and \cite[Theorem 2.4]{BermBMP}, we have

\begin{corollary} \label{acbhilbert}
Let $H$ be a Hilbert space and $T$ be an absolutely $(C, \alpha)$-Ces\`{a}ro bounded operator with $0<\alpha \le 1$. Then $\displaystyle \lim_{n\to\infty}\displaystyle \frac{ \|T^n\|}{n^{min\{\alpha,1/2\}}}=0$
\end{corollary}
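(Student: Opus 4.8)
The plan is to combine two separate upper bounds for $\|T^n\|$ and retain the sharper one in each regime. On one hand, Corollary \ref{ACB} already gives $\|T^n\|=o(n^{\alpha})$, which is exactly the desired conclusion whenever $\alpha\le 1/2$, since then $\min\{\alpha,1/2\}=\alpha$. On the other hand, I will produce the complementary bound $\|T^n\|=o(n^{1/2})$, valid for every $\alpha\in(0,1]$, which settles the range $\alpha>1/2$. Since a bound $o(n^{\alpha})$ together with a bound $o(n^{1/2})$ automatically yields $o(n^{\min\{\alpha,1/2\}})$ (one simply keeps whichever exponent is smaller), the two estimates combine to the stated $\|T^n\|=o(n^{\min\{\alpha,1/2\}})$, with no genuine case analysis beyond selecting the sharper estimate.

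To obtain $\|T^n\|=o(n^{1/2})$ I would reduce to the integer order $\alpha=1$ and invoke \cite[Theorem 2.4]{BermBMP}, which asserts that an absolutely Ces\`aro bounded operator on a Hilbert space satisfies $\|T^n\|=o(n^{1/2})$. The reduction rests on a \emph{monotonicity} of absolute Ces\`aro boundedness in the order: if $T$ is absolutely $(C,\alpha)$-Ces\`aro bounded and $0<\alpha\le\beta$, then $T$ is absolutely $(C,\beta)$-Ces\`aro bounded. Taking $\beta=1$ shows that our $T$ is absolutely Ces\`aro bounded, so \cite[Theorem 2.4]{BermBMP} applies.

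The key computation behind the monotonicity uses the semigroup law $k^{\beta-\alpha}*k^{\alpha}=k^{\beta}$ and $k^{\beta-\alpha}*k^{\alpha+1}=k^{\beta+1}$, where $k^{0}=\delta_0$ covers the endpoint $\alpha=\beta$. Writing $a_j:=\|T^jx\|$ and expanding $k^{\beta}(n-j)=\sum_{l=0}^{n-j}k^{\beta-\alpha}(l)\,k^{\alpha}(n-j-l)$, I would interchange the order of summation to obtain
$$
\sum_{j=0}^n k^{\beta}(n-j)\,a_j=\sum_{l=0}^n k^{\beta-\alpha}(l)\sum_{j=0}^{n-l}k^{\alpha}\big((n-l)-j\big)\,a_j\le C\|x\|\sum_{l=0}^n k^{\beta-\alpha}(l)\,k^{\alpha+1}(n-l),
$$
where the inner sum is estimated by the absolute $(C,\alpha)$-bound applied at $n-l$. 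The right-hand side equals $C\|x\|\,k^{\beta+1}(n)$ by the semigroup law, so dividing by $k^{\beta+1}(n)$ delivers the absolute $(C,\beta)$-bound with the same constant $C$.

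I expect the main non-routine step to be this monotonicity reduction, since \cite[Theorem 2.4]{BermBMP} is available only for the integer order $\alpha=1$, so the content of the corollary in the range $1/2<\alpha<1$ rests entirely on transporting that Hilbert-space estimate down to fractional orders. The convolution identity makes the transport clean and even constant-preserving, and the only care needed is the bookkeeping in the interchange of summations. The Hilbert-space hypothesis itself enters solely through \cite[Theorem 2.4]{BermBMP}; the reduction is valid in any Banach space.
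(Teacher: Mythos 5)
Your proposal is correct and follows essentially the same route as the paper: the paper's proof is a one-line combination of Corollary \ref{ACB} (giving $o(n^{\alpha})$) with \cite[Theorem 2.4]{BermBMP} (giving $o(n^{1/2})$ on Hilbert spaces), exactly the two bounds you merge. The only difference is that you explicitly prove the monotonicity of absolute $(C,\alpha)$-Ces\`aro boundedness in the order via the convolution identity $k^{\beta-\alpha}*k^{\alpha+1}=k^{\beta+1}$, a fact the paper takes for granted (it is likewise invoked without proof in Theorem \ref{kreiss}).
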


\section{Ergodic applications for the Ces\`aro means}

In this section we study the ergodic behaviour of the Ces\`aro means supposing several assumptions, both the geometry of the Banach space $X$, and spectral and growth conditions on the operator $T.$

In the first results, the reflexive property plays role. As consequence of Corollary \ref{ACB}, Corollary \ref{On} and \cite[Proposition 3.2]{Ed-dari 2}, we obtain the following result.

\begin{corollary}\label{ErgRef} Let $\alpha>0$. Every absolutely $(C, \alpha)$-Ces\`{a}ro bounded operator in a reflexive Banach space is $(C, \alpha)$-mean ergodic.
\end{corollary}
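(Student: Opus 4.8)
The plan is to reduce the statement to a single application of the cited mean ergodic theorem \cite[Proposition 3.2]{Ed-dari 2}, whose hypotheses are $(C,\alpha)$-Ces\`aro boundedness, reflexivity of the underlying space, and the decay condition $\|T^n\|=o(n^{\alpha})$. Reflexivity is assumed by hypothesis, and $(C,\alpha)$-Ces\`aro boundedness is automatic, since absolute $(C,\alpha)$-Ces\`aro boundedness implies $(C,\alpha)$-Ces\`aro boundedness (this is the second implication recorded in the diagram of the introduction). Consequently the only nontrivial point is to verify the growth estimate $\|T^n\|=o(n^{\alpha})$ across the full range $\alpha>0$.

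To establish this estimate I would split according to the size of $\alpha$. For $0<\alpha\le 1$, Corollary \ref{ACB} gives directly that $\|T^n\|=o(n^{\alpha})$, so the decay condition holds as stated. For $\alpha>1$, Corollary \ref{On} yields the cruder bound $\|T^n\|=O(n)$; but since $\alpha>1$ one has $n=o(n^{\alpha})$, and therefore $\|T^n\|=O(n)=o(n^{\alpha})$, so the required decay again holds. In particular the pointwise decay $\|T^nx\|=o(n^{\alpha})$ needed for any strong-convergence formulation follows trivially from the uniform bound.

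Having verified $\|T^n\|=o(n^{\alpha})$ in both regimes, and recalling that $T$ is $(C,\alpha)$-Ces\`aro bounded on the reflexive space $X$, I would invoke \cite[Proposition 3.2]{Ed-dari 2} to conclude that the Ces\`aro means $(M_T^{\alpha}(n))_{n\in\N_0}$ converge in the strong operator topology, that is, $T$ is $(C,\alpha)$-mean ergodic. The main (and essentially the only) obstacle is correctly matching the decay hypothesis of Ed-dari's proposition across the case split: for $0<\alpha\le 1$ the estimate $o(n^{\alpha})$ is precisely the sharp output of Corollary \ref{ACB}, whereas for $\alpha>1$ one must observe that the weaker bound $O(n)$ supplied by Corollary \ref{On} already dominates $n^{\alpha}$. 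Everything else in the argument is a direct citation.
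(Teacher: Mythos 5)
Your proposal is correct and takes essentially the same route as the paper: the paper deduces the corollary precisely from Corollary \ref{ACB} (case $0<\alpha\le 1$), Corollary \ref{On} (case $\alpha>1$, where $\|T^n\|=O(n)=o(n^{\alpha})$), and \cite[Proposition 3.2]{Ed-dari 2}. Your case split and the observation that the $O(n)$ bound already suffices when $\alpha>1$ are exactly the details the paper leaves implicit in its one-line citation.
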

\noindent Hence, by Corollary \ref{mixing}, we have the next corollary.

\begin{corollary}
Let $\alpha>0$. There exists a mixing $(C, \alpha)$-mean ergodic operator on $\ell^p(\Bbb N)$ for $1< p <\infty$.
\end{corollary}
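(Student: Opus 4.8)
The plan is to exhibit a single operator that simultaneously witnesses both properties, by feeding the mixing example of Corollary \ref{mixing} into the reflexive mean-ergodicity criterion of Corollary \ref{ErgRef}. The decisive geometric fact is that $\ell^p(\N)$ is reflexive for $1<p<\infty$, so Corollary \ref{ErgRef} applies the moment we have produced an absolutely $(C,\alpha)$-Ces\`aro bounded operator on this space that is also mixing.

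First I would dispose of the range $0<\alpha\le 1$. Fixing any $\varepsilon$ with $0<\varepsilon<\alpha$, Corollary \ref{mixing} provides an operator $T$ on $\ell^p(\N)$ (the weighted backward shift of Theorem \ref{ejemplos}) which is mixing and absolutely $(C,\alpha)$-Ces\`aro bounded. Since $\ell^p(\N)$ is reflexive, Corollary \ref{ErgRef} forces $T$ to be $(C,\alpha)$-mean ergodic, and $T$ is mixing, which settles this case at once.

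For $\alpha>1$ the example of Corollary \ref{mixing} is not directly available, so I would first record the monotonicity of absolute Ces\`aro boundedness in the order: if $T$ is absolutely $(C,\alpha_0)$-Ces\`aro bounded then it is absolutely $(C,\alpha)$-Ces\`aro bounded for every $\alpha>\alpha_0$. This follows from the semigroup law $k^{\alpha}=k^{\alpha-\alpha_0}*k^{\alpha_0}$ stated in the Introduction: writing $b_n:=\sum_{j=0}^n k^{\alpha_0}(n-j)\|T^jx\|\le C\|x\|\,k^{\alpha_0+1}(n)$ and convolving once more with the nonnegative kernel $k^{\alpha-\alpha_0}$ gives
$$\sum_{j=0}^n k^{\alpha}(n-j)\|T^jx\|=(k^{\alpha-\alpha_0}*b)(n)\le C\|x\|\,(k^{\alpha-\alpha_0}*k^{\alpha_0+1})(n)=C\|x\|\,k^{\alpha+1}(n),$$
where the last identity is again the semigroup law $k^{\alpha-\alpha_0}*k^{\alpha_0+1}=k^{\alpha+1}$. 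Taking now any $\alpha_0\in(0,1]$, say $\alpha_0=1$, and the corresponding mixing absolutely $(C,\alpha_0)$-Ces\`aro bounded operator from Corollary \ref{mixing}, this monotonicity makes it absolutely $(C,\alpha)$-Ces\`aro bounded; as before Corollary \ref{ErgRef} yields $(C,\alpha)$-mean ergodicity, and mixing is preserved because it is the very same operator.

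The only genuine obstacle is the half-line $\alpha>1$, where no example was constructed directly; everything there rests on the order-monotonicity of the absolute Ces\`aro condition, whose justification depends only on the convolution identity for the Ces\`aro kernels and their positivity, hence is routine once set up. I would also note in passing that there is no tension between mixing and mean ergodicity here: the fixed subspace of the weighted backward shift is trivial, since a candidate fixed vector has coefficients comparable to $j^{-\beta}$ with $\beta<1/p$, which does not lie in $\ell^p(\N)$, so the Ces\`aro means simply converge to $0$.
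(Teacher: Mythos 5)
Your proof is correct and follows essentially the same route as the paper: the paper obtains this corollary by feeding the mixing absolutely $(C,\alpha)$-Ces\`aro bounded operator of Corollary \ref{mixing} into Corollary \ref{ErgRef}, using the reflexivity of $\ell^p(\N)$ for $1<p<\infty$. The only difference is that you explicitly prove the order-monotonicity of absolute Ces\`aro boundedness needed to cover $\alpha>1$ (via positivity of the kernels and the convolution identity $k^{\alpha-\alpha_0}*k^{\alpha_0+1}=k^{\alpha+1}$), a fact the paper uses implicitly and without proof (as it also does in the proof of Theorem \ref{kreiss}).
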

\noindent Results according to previous ones are shown in \cite{AS, beltran14, BermBMP} for $\alpha =1.$

\bigskip

From now on, we focus on stability results for the Ces\`aro mean differences of size $n$ and $n+1$ for bounded operators. In 1986, Y. Katznelson and L. Tzafriri proved that if $T\in \mathcal{B}(X)$ is power bounded, then $\lim_{n\to\infty}\lVert T^n-T^{n+1}\rVert=0$ if and only if $\sigma(T)\cap \partial \Bbb D \subset \{1\},$ see \cite[Theorem 1]{KT}. If $T$ is $(C,1)$-Ces\`aro bounded and $\sigma(T)\cap \partial \Bbb D = \{1\},$ but $T$ is not power bounded, then $\lVert T^n-T^{n+1}\rVert$ need not converge to zero. See examples in \cite{L, TZ}. Also, the result is not true if the condition $(C,1)$-Ces\`aro bounded is changed by Kreiss bounded, see \cite[Example 4]{N}.  Recently, in \cite{Abadias}, the author proved that if $T$ is $(C,\alpha)$-Ces\`aro bounded, with $\alpha>0,$ and $\sigma(T)\cap \partial \Bbb D \subset \{1\},$ then $\lim_{n\to\infty}\lVert M_T^{\alpha}(n+1)-M_T^{\alpha}(n)\rVert=0.$

%\begin{theorem}[\cite{Abadias}] Let  $\alpha > 0,$  $T$ be a $(C, \alpha)$-Ces\`aro  bounded operator on a Banach space $X$ such that $\sigma(T)\cap \partial \Bbb D \subset \{1\}$. Then   $\displaystyle \lim_{n\to\infty}\|M_T^{\alpha }(n+1)- M_T^{\alpha }(n)\|=0$.
%\end{theorem}

We recall the following lemma which will be used in the proof of the main theorem of this section.

\begin{lemma}(\cite[Lemma 1]{Y}) \label{Y}
Let $\alpha >0$ and $T\in \mathcal{B}(X)$ such that $\|T^n\|=o(n^{\omega})$, where $w=min(1,\alpha).$ Then
$\displaystyle \lim_{n\to\infty} \|M_T^{\alpha }(n)(T-I)\|=0$.
\end{lemma}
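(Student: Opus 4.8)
The plan is to turn $M_T^{\alpha}(n)(T-I)$ into an explicit scalar-weighted average of powers of $T$, and then to play the growth hypothesis $\|T^m\|=o(m^{\omega})$ against the kernel asymptotics \eqref{asymp}. The starting point is the algebraic identity
\[
\Delta^{-\alpha}\mathcal{T}(n)(T-I)=\sum_{l=0}^{n+1}k^{\alpha-1}(l)\,T^{\,n+1-l}-k^{\alpha}(n+1)\,I,
\]
valid for every $\alpha>0$. To obtain it I would start from $\Delta^{-\alpha}\mathcal{T}(n)(T-I)=\sum_{j=0}^{n}k^{\alpha}(n-j)\bigl(T^{j+1}-T^{j}\bigr)$, shift the index in the first half to recognize the right-hand side as $\Delta^{-\alpha}\mathcal{T}(n+1)-\Delta^{-\alpha}\mathcal{T}(n)-k^{\alpha}(n+1)I$, and then use $k^{\alpha}(m)-k^{\alpha}(m-1)=k^{\alpha-1}(m)$, which itself follows by telescoping from $k^{\alpha-1}*k^{1}=k^{\alpha}$ and $k^{1}\equiv 1$. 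Dividing by $k^{\alpha+1}(n)$ and using \eqref{asymp} shows that the contribution $k^{\alpha}(n+1)I/k^{\alpha+1}(n)$ has norm $\asymp 1/n$ and is therefore harmless, so everything reduces to estimating $\frac{1}{k^{\alpha+1}(n)}\bigl\|\sum_{l=0}^{n+1}k^{\alpha-1}(l)T^{\,n+1-l}\bigr\|$.

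Before estimating I would record two facts about the coefficients $k^{\alpha-1}(l)$, separating the regimes $\alpha\ge1$ and $0<\alpha<1$. If $\alpha\ge1$ then $k^{\alpha-1}(l)\ge 0$ and $\sum_{l=0}^{n+1}k^{\alpha-1}(l)=k^{\alpha}(n+1)\asymp n^{\alpha-1}$; here $\omega=1$. If $0<\alpha<1$ then $k^{\alpha-1}(0)=1$ while $k^{\alpha-1}(l)<0$ for $l\ge1$, and since $\sum_{l\ge0}k^{\alpha-1}(l)=\lim_{z\to1^-}(1-z)^{1-\alpha}=0$ one gets $\sum_{l=0}^{\infty}|k^{\alpha-1}(l)|=2$; here $\omega=\alpha$. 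In either case \eqref{asymp} gives the tail decay $|k^{\alpha-1}(l)|\le C_{\alpha}\,l^{\alpha-2}$ for $l\ge1$, so $|k^{\alpha-1}(l)|\asymp n^{\alpha-2}$ when $l$ is of order $n$. (The case $\alpha=1$ is degenerate, $k^{0}(l)=\delta_{l,0}$, and reduces at once to $\|M_T^{1}(n)(T-I)\|=\|T^{n+1}-I\|/(n+1)\to0$.)

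For the main estimate, fix $\eta>0$ and choose $M$ so that $\|T^{m}\|\le \eta m^{\omega}$ for $m\ge M$; set $K=\max_{0\le m<M}\|T^{m}\|$. I would split the sum at $l=n+1-M$. On the bulk $0\le l\le n+1-M$ the exponent $m=n+1-l\ge M$, so $\|T^{m}\|\le\eta(n+1)^{\omega}$ and the bulk is bounded by $\eta(n+1)^{\omega}\sum_{l=0}^{n+1}|k^{\alpha-1}(l)|$; by the two facts above the quotient $(n+1)^{\omega}\sum_{l}|k^{\alpha-1}(l)|/k^{\alpha+1}(n)$ stays bounded in both regimes, yielding $\limsup_{n}(\text{bulk})/k^{\alpha+1}(n)\le c_{\alpha}\eta$. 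On the tail $n+2-M\le l\le n+1$ there are at most $M$ terms, each with $\|T^{\,n+1-l}\|\le K$ and $|k^{\alpha-1}(l)|\le C_{\alpha}n^{\alpha-2}$, so the tail is $O(MK\,n^{\alpha-2})$, which after dividing by $k^{\alpha+1}(n)\asymp n^{\alpha}$ is $O(n^{-2})\to0$. Combining, $\limsup_{n}\|M_T^{\alpha}(n)(T-I)\|\le c_{\alpha}\eta$, and letting $\eta\to0$ finishes the proof.

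The only real subtlety, and the place I would be most careful, is the bulk estimate: it is exactly the balance $(n+1)^{\omega}\sum_{l}|k^{\alpha-1}(l)|\asymp n^{\alpha}\asymp k^{\alpha+1}(n)$ that forces the choice $\omega=\min(1,\alpha)$. For $\alpha\ge1$ it is the growth $\sum_l k^{\alpha-1}(l)\asymp n^{\alpha-1}$ of the kernel mass (times $n$) that matches the normalizer, whereas for $\alpha<1$ it is the bounded total variation of $k^{\alpha-1}$ (times $n^{\alpha}$) that does; getting the constants from \eqref{asymp} right in each regime is the crux.
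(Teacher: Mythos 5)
Your proposal is correct. Note, however, that the paper itself gives no proof of this lemma at all: it is quoted verbatim from Yoshimoto (\cite[Lemma 1]{Y}), so there is no internal argument to compare against. What you have done is reconstruct a self-contained proof along the classical lines: your identity $\Delta^{-\alpha}\mathcal{T}(n)(T-I)=\sum_{l=0}^{n+1}k^{\alpha-1}(l)T^{n+1-l}-k^{\alpha}(n+1)I$ is exactly the statement that $\Delta^{-\alpha}\mathcal{T}(n)(T-I)=\Delta^{-(\alpha-1)}\mathcal{T}(n+1)-k^{\alpha}(n+1)I$, i.e.\ that applying $T-I$ drops the Ces\`aro order by one, and the rest is the standard dichotomy on the kernel $k^{\alpha-1}$: nonnegativity and mass $k^{\alpha}(n+1)\asymp n^{\alpha-1}$ when $\alpha\ge1$ (where $\omega=1$), versus a single positive term plus an absolutely summable negative tail of total variation $2$ when $0<\alpha<1$ (where $\omega=\alpha$). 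All the key steps check out: the difference identity $k^{\alpha}(m)-k^{\alpha}(m-1)=k^{\alpha-1}(m)$ follows directly from $\Gamma(\alpha)=(\alpha-1)\Gamma(\alpha-1)$ (which is worth saying explicitly, since for $0<\alpha<1$ you are using the kernel and the convolution identity at a negative order, outside the range $\alpha,\beta>0$ in which the paper states them); the bulk/tail split with threshold $M$ depending on $\eta$ is sound; the bulk-to-normalizer ratios converge to finite constants ($\alpha$ and $2\Gamma(\alpha+1)$ respectively); and the tail is $O(n^{-2})$ with constants depending only on $\alpha$, $M$, $K$. Your closing remark about the balance forcing $\omega=\min(1,\alpha)$ is also the right diagnosis of why the hypothesis is stated as it is. In short: a correct proof, and a genuine addition relative to the paper, which delegates this entirely to the reference.
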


%\begin{lemma} \label{log}
%Let $T\in B(X)$ be an absolutely $(C, \alpha)$-Ces\`{a}ro bounded  operator with $\alpha \ge 1$. Then
%$\|M_T(n)\|\le c log(n+2)$.
%\end{lemma}
%\begin{proof}
%It is a  consequence of Theorem \ref{kreiss} and \cite[Theorem 6.2]{SW}.
%\end{proof}

\begin{theorem}\label{ergodic} If any of the following conditions is true:

\begin{enumerate}
\item [a)]  $T$ is an absolutely $(C, \alpha)$-Ces\`{a}ro bounded  operator and $0<\alpha \le 1,$

\item [b)]  $T$ is Kreiss bounded, $1\le\alpha <2$ and $m(\sigma(T)\cap \partial \Bbb D)=0$,

\item [c)] $T$ is Kreiss bounded and  $\alpha \ge 2$,

\end{enumerate}
then
$$
\displaystyle \lim_{n\to\infty}\|M_T^{\alpha }(n+1)- M_T^{\alpha }(n)\|=0.
$$
\end{theorem}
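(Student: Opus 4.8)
The plan is to reduce the whole statement to a single algebraic identity for the consecutive difference $M_T^{\alpha}(n+1)-M_T^{\alpha}(n)$ and then to control its two pieces by the growth results already in hand. First I would record the elementary recursion coming from $T\,\Delta^{-\alpha}\mathcal{T}(n)=\Delta^{-\alpha}\mathcal{T}(n+1)-k^{\alpha}(n+1)I$ (shift the index in $\sum_j k^{\alpha}(n-j)T^{j+1}$), namely
$$
\Delta^{-\alpha}\mathcal{T}(n)(T-I)=\Delta^{-\alpha}\mathcal{T}(n+1)-\Delta^{-\alpha}\mathcal{T}(n)-k^{\alpha}(n+1)I .
$$
Dividing by $k^{\alpha+1}(n+1)$ and inserting the exact kernel ratios $\tfrac{k^{\alpha+1}(n)}{k^{\alpha+1}(n+1)}=\tfrac{n+1}{n+1+\alpha}$ and $\tfrac{k^{\alpha}(n+1)}{k^{\alpha+1}(n+1)}=\tfrac{\alpha}{n+1+\alpha}$ yields the identity, valid for every $\alpha>0$,
$$
M_T^{\alpha}(n+1)-M_T^{\alpha}(n)=\frac{n+1}{n+1+\alpha}\,M_T^{\alpha}(n)(T-I)-\frac{\alpha}{n+1+\alpha}\bigl(M_T^{\alpha}(n)-I\bigr).
$$
Because $\tfrac{n+1}{n+1+\alpha}\to1$ and $\tfrac{\alpha}{n+1+\alpha}\to0$, the theorem reduces to two claims: \textbf{(A)} $\|M_T^{\alpha}(n)(T-I)\|\to0$, and \textbf{(B)} $\tfrac1n\|M_T^{\alpha}(n)\|\to0$.

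Claim (B) is easy in every case. In a) the operator is $(C,\alpha)$-Ces\`aro bounded, so $\sup_n\|M_T^{\alpha}(n)\|<\infty$; in c) Kreiss boundedness means $\|M^2_T(n)\|\le C$ (the $\lambda=1$ instance), i.e. $(C,2)$-boundedness, hence $(C,\alpha)$-boundedness for $\alpha\ge2$ and again $\sup_n\|M_T^{\alpha}(n)\|<\infty$; in b) the hypotheses give $\|T^n\|=o(n)$ by Remark \ref{kreiss2} ii), which transfers to the weighted averages as $\|M_T^{\alpha}(n)\|=o(n)$. For claim (A) in cases a) and b) I would invoke Lemma \ref{Y}, whose only hypothesis is $\|T^n\|=o(n^{\omega})$ with $\omega=\min(1,\alpha)$. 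In a), Corollary \ref{ACB} gives $\|T^n\|=o(n^{\alpha})$ and here $\omega=\alpha$; in b), since $\alpha\ge1$ one has $\omega=1$ and Remark \ref{kreiss2} ii) gives exactly $\|T^n\|=o(n)$. In both situations Lemma \ref{Y} delivers $\|M_T^{\alpha}(n)(T-I)\|\to0$, and the displayed identity closes the argument.

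The genuine difficulty is claim (A) in case c). Here $\omega=1$, but Kreiss boundedness alone only provides $\|T^n\|=O(n)$ (Remark \ref{kreiss2}), so Lemma \ref{Y} is unavailable and the crude bound merely gives $\|M_T^{\alpha}(n)(T-I)\|=O(1)$, which does not tend to $0$. To extract the missing decay I would sharpen the identity for $\alpha\ge1$, using $\Delta^{-\alpha}\mathcal{T}(n+1)-\Delta^{-\alpha}\mathcal{T}(n)=\Delta^{-(\alpha-1)}\mathcal{T}(n+1)$, into
$$
M_T^{\alpha}(n)(T-I)=\frac{\alpha}{n+1}\bigl(M_T^{\alpha-1}(n+1)-I\bigr),
$$
so that (A) becomes the single estimate $\|M_T^{\alpha-1}(n)\|=o(n)$, with $\alpha-1\ge1$.

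This last estimate is exactly where cancellation in the resolvent must be exploited, and it is the step I expect to be the main obstacle. Writing $\Delta^{-(\alpha-1)}\mathcal{T}(n)$ as the $n$-th Taylor coefficient of $(1-z)^{-(\alpha-1)}(I-zT)^{-1}$ and evaluating the Cauchy integral on the circle $|z|=1-\tfrac1n$, I would combine the Kreiss bound $\|(I-zT)^{-1}\|\le C/(1-|z|)$ with the boundary integral $\int_0^{2\pi}|1-re^{i\theta}|^{-\beta}\,d\theta=O\bigl((1-r)^{1-\beta}\bigr)$ for $\beta>1$ and $O\bigl(\log\tfrac1{1-r}\bigr)$ for $\beta=1$. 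This gives $\|\Delta^{-(\alpha-1)}\mathcal{T}(n)\|=O(n^{\alpha-1}\log n)$, whence $\|M_T^{\alpha-1}(n)\|=O(\log n)=o(n)$, in sharp contrast to the pointwise $O(n)$ growth of $\|T^n\|$. Substituting this into the refined identity yields $\|M_T^{\alpha}(n)(T-I)\|=O\bigl(\tfrac{\log n}{n}\bigr)\to0$, which together with (B) completes case c). Everything outside this contour estimate is routine bookkeeping with the ratios of the kernels $k^{\alpha}$.
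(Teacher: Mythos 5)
Your proposal is correct, and both kernel identities you derive are valid; in fact your sign on the $\frac{\alpha}{n+\alpha+1}M_T^{\alpha}(n)$ term is the one that follows from the index shift (the identity displayed in the paper's proof of case a) carries it with a $+$), but the discrepancy is immaterial since only $\frac{\alpha}{n+\alpha+1}\|M_T^{\alpha}(n)\|\to 0$ is ever used. In cases a) and b) your argument is essentially the paper's: Corollary \ref{ACB} plus Lemma \ref{Y} in a) (you treat $\alpha=1$ uniformly through Lemma \ref{Y}, where the paper instead quotes \cite[Corollary 2.6]{BermBMP}), and Remark \ref{kreiss2} ii) plus Lemma \ref{Y} in b); for the remainder term in b) you average $\|T^j\|=o(j)$ against the kernel, while the paper bounds $\|M_T^{\alpha}(n)\|$ by $\sup_{0\le j\le n}\|M_T(j)\|=O(\ln n)$ via \cite[Theorem 6.2]{SW} --- both are fine. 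The genuine divergence is case c). The paper gets the key decay $\frac1n\|M_T^{\alpha-1}(n)\|\to 0$ by observing that for $\alpha\ge 2$ the mean $M_T^{\alpha-1}(n)$ is a convex combination of the $(C,1)$ means $M_T(j)$, $0\le j\le n$, and then citing \cite[Theorem 6.2]{SW} ($\|M_T(n)\|=O(\ln n)$ under Kreiss boundedness); you instead prove $\|M_T^{\alpha-1}(n)\|=O(\log n)$ from scratch, by a Cauchy-integral estimate of the Taylor coefficients of $(1-z)^{-(\alpha-1)}(I-zT)^{-1}$ on the circle $|z|=1-\frac1n$, combining the Kreiss resolvent bound $\|(I-zT)^{-1}\|\le C/(1-|z|)$ with the classical estimate of $\int_0^{2\pi}|1-re^{i\theta}|^{-\beta}\,d\theta$. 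That computation is correct (the series is analytic on the unit disc because the Kreiss condition forces $\sigma(T)\subset\overline{\mathbb{D}}$), and it amounts to a self-contained, fractional-order reproof of the very result of \cite{SW} that the paper invokes: it buys independence from that reference --- and shows in passing that Kreiss boundedness implies $(C,\beta)$-Ces\`aro boundedness for every $\beta>1$ --- at the cost of redoing the resolvent estimate that the citation provides for free.
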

\begin{proof}
a) If $\alpha =1$, from the known identity
$$
\frac{n+2}{n+1}M_T(n+1)- M_T(n) = \frac{1}{n+1} T^{n+1},
$$
one gets
 $$
M_T(n+1)- M_T(n) = \frac{1}{n+1} T^{n+1} - \frac{1}{n+1}M_T(n+1).
$$
The result follows from \cite[Corollary 2.6]{BermBMP} and the $(C,1)$-Ces\`aro boundedness of $T.$

If $0< \alpha < 1$, note that $$
M_T^{\alpha }(n+1)- M_T^{\alpha }(n) = \frac{\alpha}{n+\alpha +1} I +\frac{n+1}{n+\alpha +1}M_T^{\alpha }(n)(T-I)+ \frac{\alpha}{n+\alpha +1}M_T^{\alpha }(n),
$$ see \cite[p.79]{Abadias}. By Corollary \ref{ACB} and Lemma \ref{Y} we have
$$
\displaystyle \lim_{n\to\infty} \|M_T^{\alpha }(n)(T-I)\|=0,
$$
 and therefore $\displaystyle \lim_{n\to\infty}\|M_T^{\alpha }(n+1)- M_T^{\alpha }(n)\|=0.$

b) Note that by Remark \ref{kreiss2} ii) we have that $\|T^n\|=o(n)$. Then by Lemma \ref{Y}, one gets  $\displaystyle \lim_{n\to\infty} \|M_T^{\alpha }(n)(T-I)\|=0$ for  $1\le\alpha <2.$ Using that the Kreiss condition implies $\lVert M_{T}(n)\rVert=O(\ln (n))$ (\cite[Theorem 6.2]{SW}), we have that $
 \lim_{n\to\infty}\|\frac{1 }{n} M_T ^{\alpha }(n)\| \le \lim_{n\to\infty}\frac{1 }{n}\sup_{0\leq j\leq n}\| M_T (j)\|=0$. By the above comments and the following identity (\cite[p.78]{Abadias})
\begin{equation*}
M_T^{\alpha }(n+1)- M_T^{\alpha }(n)= M_T^{\alpha }(n)(T-I) + \frac{\alpha}{n+1}(I-M_T^{\alpha}(n+1)) ,
\end{equation*}
we conclude the result.

c) By \cite[Theorem 6.2]{SW}, since $\alpha \ge 2$ we have
$$
 \lim_{n\to\infty}\|\frac{1 }{n} M_T ^{\alpha -1}(n)\| \le \lim_{n\to\infty}\frac{1 }{n}\sup_{0\leq j\leq n}\| M_T (n)\|=0.
$$
Doing use of the identity (\cite[p.78]{Abadias})
 $$
M_T^{\alpha }(n+1)- M_T^{\alpha }(n) = \frac{\alpha }{n+1} M_T ^{\alpha -1}(n+1) - \frac{\alpha}{n+1}M_T ^{\alpha}(n+1),
$$
we get
$$
\displaystyle \lim_{n\to\infty}\|M_T^{\alpha }(n+1)- M_T^{\alpha }(n)\|=0.
$$
\end{proof}

Since all absolutely $(C, \alpha)$-Ces\`{a}ro bounded operators are Kreiss bounded, we get the following corollary.

\begin{corollary}\label{ergodic2}
Let $T$ be an absolutely $(C, \alpha)$-Ces\`{a}ro bounded operator. If any of the following conditions is true:

\begin{enumerate}

\item [a)] $1<\alpha <2$ and $m(\sigma(T)\cap \partial \Bbb D)=0$,

\item [b)] $\alpha \ge 2$,

\end{enumerate}
then
$$
\displaystyle \lim_{n\to\infty}\|M_T^{\alpha }(n+1)- M_T^{\alpha }(n)\|=0.
$$
\end{corollary}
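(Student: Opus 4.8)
The plan is to obtain this statement as an immediate consequence of Theorem \ref{ergodic}, once the Kreiss boundedness hypothesis has been verified. The key observation is that in both cases the order satisfies $\alpha > 1$: in case a) we have $1 < \alpha < 2$, and in case b) we have $\alpha \ge 2$. Therefore Theorem \ref{kreiss} applies directly and shows that the absolutely $(C,\alpha)$-Ces\`aro bounded operator $T$ is Kreiss bounded. This is the crucial reduction, since it converts the hypothesis on $T$ into precisely the Kreiss boundedness required by parts b) and c) of Theorem \ref{ergodic}.

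With Kreiss boundedness in hand, I would treat the two cases separately. In case a), I note that $1 < \alpha < 2$ in particular gives $1 \le \alpha < 2$, so together with the spectral assumption $m(\sigma(T) \cap \partial \mathbb{D}) = 0$ and the Kreiss boundedness just obtained, all hypotheses of Theorem \ref{ergodic} b) are met; hence $\lim_{n\to\infty}\|M_T^{\alpha}(n+1) - M_T^{\alpha}(n)\| = 0$. In case b), the conditions $\alpha \ge 2$ together with Kreiss boundedness are exactly the hypotheses of Theorem \ref{ergodic} c), which yields the same conclusion.

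There is essentially no analytic obstacle here: the corollary is a matching-of-hypotheses argument, and all the substantive work has already been carried out in Theorem \ref{kreiss} (which supplies Kreiss boundedness from absolute $(C,\alpha)$-Ces\`aro boundedness when $\alpha > 1$) and in Theorem \ref{ergodic} (which supplies the decay of the Ces\`aro mean differences from Kreiss boundedness combined with the spectral and order conditions). The only point requiring minimal care is the inclusion of the open interval $1 < \alpha < 2$ into the half-open interval $1 \le \alpha < 2$ appearing in Theorem \ref{ergodic} b), which is immediate. Thus the proof amounts to invoking Theorem \ref{kreiss} and then quoting the appropriate branch of Theorem \ref{ergodic} in each case.
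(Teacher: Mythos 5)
Your proof is correct and follows essentially the same route as the paper: the paper likewise deduces this corollary by observing that an absolutely $(C,\alpha)$-Ces\`aro bounded operator with $\alpha>1$ is Kreiss bounded (Theorem \ref{kreiss}) and then applying parts b) and c) of Theorem \ref{ergodic}. Your explicit check that $1<\alpha<2$ falls inside the range $1\le\alpha<2$ of Theorem \ref{ergodic} b) is the only detail the paper leaves implicit.
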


We deduce from Theorem \ref{ergodic} a) that  if $0<\alpha\le 1$ and  $T$ is an absolutely $(C, \alpha)$-Ces\`aro bounded operator, then  $\|M_T^{\alpha }(n)(T-I)\|$ converges to zero as $n\to \infty$. So, a natural question is if the following extended version of the Katznelson-Tzafriri theorem is true:

\begin{question} Is there $\alpha$ with  $0<\alpha\le 1$ such that if $T$ is an absolutely $(C, \alpha)$-Ces\`aro bounded operator with $\sigma(T) \cap \partial \Bbb D =\{1\},$  $\|T^n(T-I)\|$ converges to zero as $n\to \infty$?
\end{question}

On the other hand, assuming a less restrictive condition ($1<\alpha<2$), we obtain the next ergodic result.
\begin{theorem}
Let $T$ be an absolutely $(C, \alpha)$-Ces\`{a}ro bounded  operator with $1<\alpha <2$. Then $
\displaystyle \lim_{n\to\infty} \|M_T^{\alpha }(n)(T-I)^2\|=0.
$
\end{theorem}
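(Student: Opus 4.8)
The plan is to reduce the second-order quantity $M_T^\alpha(n)(T-I)^2$ to a first-order quantity of the fractional order $\alpha-1\in(0,1)$, and then to estimate that first-order quantity by hand, since Lemma \ref{Y} is not available in this range. First I would combine the two identities from \cite[p.~78]{Abadias} that already appear in the proof of Theorem \ref{ergodic} (the one writing $M_T^\alpha(n+1)-M_T^\alpha(n)$ through $M_T^\alpha(n)(T-I)$, and the one writing it through $M_T^{\alpha-1}(n+1)$). Eliminating the common left-hand side and simplifying gives the relation
\[
M_T^\alpha(n)(T-I)=\frac{\alpha}{n+1}\bigl(M_T^{\alpha-1}(n+1)-I\bigr),
\]
in agreement with the elementary identity $\frac{k^\alpha(n+1)}{k^{\alpha+1}(n)}=\frac{\alpha}{n+1}$. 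Multiplying on the right by $(T-I)$ yields
\[
M_T^\alpha(n)(T-I)^2=\frac{\alpha}{n+1}\bigl(M_T^{\alpha-1}(n+1)(T-I)-(T-I)\bigr).
\]
Since $\|T-I\|$ is a fixed constant, the second summand is $O(1/n)$, so the whole problem reduces to proving $\|M_T^{\alpha-1}(n+1)(T-I)\|=o(n)$.

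For this estimate I would work directly with $\beta:=\alpha-1\in(0,1)$ and the Ces\`aro sum $\Delta^{-\beta}\mathcal{T}(m)(T-I)=\sum_{j=0}^m k^\beta(m-j)(T^{j+1}-T^j)$. A summation by parts rewrites this as $\sum_{i=1}^m\bigl(k^\beta(m+1-i)-k^\beta(m-i)\bigr)T^i+T^{m+1}-k^\beta(m)I$. By Corollary \ref{On} we have $\|T^j\|\le C(j+1)\le C(m+1)$ for $0\le j\le m+1$, while the crucial point is that for $0<\beta<1$ the kernel $k^\beta$ is decreasing, so the weights telescope:
\[
\sum_{i=1}^m\bigl|k^\beta(m+1-i)-k^\beta(m-i)\bigr|=\sum_{l=0}^{m-1}\bigl(k^\beta(l)-k^\beta(l+1)\bigr)=1-k^\beta(m)\le 1.
\]
Hence $\|\Delta^{-\beta}\mathcal{T}(m)(T-I)\|=O(m)$, and dividing by $k^{\beta+1}(m)=k^\alpha(m)$, which by \eqref{asymp} is comparable to $m^{\alpha-1}$, gives $\|M_T^{\alpha-1}(m)(T-I)\|=O(m^{2-\alpha})$.

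Plugging this back into the displayed reduction, I would obtain
\[
\|M_T^\alpha(n)(T-I)^2\|\le\frac{\alpha}{n+1}\bigl(O\bigl((n+1)^{2-\alpha}\bigr)+\|T-I\|\bigr)=O(n^{1-\alpha})+O(n^{-1}),
\]
which tends to $0$ because $\alpha>1$. The main obstacle is exactly the middle step: Lemma \ref{Y} cannot be invoked for the order $\alpha-1$, since it would require $\|T^n\|=o(n^{\alpha-1})$ (as $\min(1,\alpha-1)=\alpha-1$), whereas absolutely $(C,\alpha)$-Ces\`aro boundedness only yields $\|T^n\|=O(n)$. The summation-by-parts bound replaces it, and it is here that the hypothesis $\alpha<2$ is used essentially, through the monotonicity of $k^{\alpha-1}$ that keeps the total weight bounded; for $\alpha\ge 2$ the analogous sum would grow like $m^{\alpha-2}$ and this argument would break down.
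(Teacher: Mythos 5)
Your proof is correct, and it takes a genuinely different route from the paper's. The paper's own proof is a one-line delegation: by Corollary \ref{On} one has $\|T^n\|=O(n)$, hence $\|T^n\|=o(n^{\alpha})$ since $\alpha>1$, and the conclusion is then obtained ``following the ideas in \cite[p.9 and 10]{Ed-dari 1}''. You start from the same input, $\|T^n\|=O(n)$, but replace the external citation by a self-contained argument: the order-reduction identity $M_T^{\alpha}(n)(T-I)=\frac{\alpha}{n+1}\bigl(M_T^{\alpha-1}(n+1)-I\bigr)$, which is indeed what one gets by eliminating $M_T^{\alpha}(n+1)-M_T^{\alpha}(n)$ between the two identities of \cite[p.78]{Abadias} (equivalently, it follows directly from $k^{\alpha}(m)-k^{\alpha}(m-1)=k^{\alpha-1}(m)$ together with $\frac{k^{\alpha}(n+1)}{k^{\alpha+1}(n)}=\frac{\alpha}{n+1}$), followed by a summation-by-parts estimate of $\Delta^{-(\alpha-1)}\mathcal{T}(m)(T-I)$ in which the monotonicity of $k^{\alpha-1}$ for $1<\alpha<2$ telescopes the weights to $1-k^{\alpha-1}(m)\le 1$. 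All steps check: this gives $\|\Delta^{-(\alpha-1)}\mathcal{T}(m)(T-I)\|=O(m)$, hence, dividing by $k^{\alpha}(m)$, which is comparable to $m^{\alpha-1}$ by \eqref{asymp}, one gets $\|M_T^{\alpha-1}(m)(T-I)\|=O(m^{2-\alpha})=o(m)$, and your reduction then yields the explicit rate $\|M_T^{\alpha}(n)(T-I)^2\|=O(n^{1-\alpha})+O(n^{-1})$. What your approach buys is a fully elementary, quantitative proof; what the paper's buys is brevity, and Ed-dari's argument requires only the weaker growth $\|T^n\|=o(n^{\alpha})$ --- though your telescoping also survives under that weaker hypothesis (it gives $o(m^{\alpha})$ for the Ces\`aro sum and hence $o(m)$ for the mean), so nothing is actually lost.

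One correction to your closing remark: your argument does \emph{not} break down for $\alpha\ge 2$. There $k^{\alpha-1}$ is increasing and the weights telescope to $k^{\alpha-1}(m)-1=O(m^{\alpha-2})$, so $\|\Delta^{-(\alpha-1)}\mathcal{T}(m)(T-I)\|=O(m\cdot m^{\alpha-2})+O(m)=O(m^{\alpha-1})$, and dividing by $k^{\alpha}(m)$ gives $\|M_T^{\alpha-1}(m)(T-I)\|=O(1)$; fed into your reduction this yields $\|M_T^{\alpha}(n)(T-I)^2\|=O(1/n)\to 0$. So your method in fact proves the conclusion for every $\alpha>1$, not only $1<\alpha<2$; the restriction changes the rate you obtain but is not what makes the telescoping work.
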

\begin{proof}
If $1< \alpha < 2$, we have by Corollary \ref{On} that $\|T^n\|=o(n^{\alpha})$, then following the ideas in \cite[p.9 and 10]{Ed-dari 1} one gets
$$
\displaystyle \lim_{n\to\infty} \|M_T^{\alpha }(n)(T-I)^2\|=0.
$$
\end{proof}

To finish, we comment the relevance of the conditions for Theorem \ref{ergodic} in the Kreiss bounded case.

\begin{remark} The condition $m(\sigma(T)\cap \partial \Bbb D)=0$ in Theorem \ref{ergodic} b) is not redundant. Indeed, in the proof of \cite[Theorem 6]{N}, the author constructs a bounded linear operator $A$ on a Banach space of analytic functions in the unit disc such that  $A$ is Kreiss bounded and $\sigma(A)\cap \partial \Bbb D$ has positive measure. Moreover, it satisfies that \begin{equation}\label{6}\|A^n\|\geq 1+\frac{n}{2}m(\sigma(A)\cap \partial \Bbb D).\end{equation} Then the identity $$M_A(n+1)- M_A(n) = \frac{1}{n+1} A^{n+1} - \frac{1}{n+1}M_A(n+1),$$ identity \eqref{6} and \cite[Theorem 6.2]{SW} imply $$\lim_{n\to \infty}\|M_A(n+1)- M_A(n)\|\neq 0.$$
\end{remark}

\end{document}